\definecolor{darkblue}{rgb}{0.0,0.0,0.65}
\definecolor{darkred}{rgb}{0.68,0.05,0.0}
\definecolor{darkgreen}{rgb}{0.0,0.29,0.29}
\definecolor{darkpurple}{rgb}{0.47,0.09,0.29}
\newcommand{\mbf}[1]{\bm{#1}}
\providecommand{\U}[1]{\protect\rule{.1in}{.1in}}
\newtheorem{theorem}{Theorem}
\theoremstyle{plain}
\newtheorem{conjecture}{Conjecture}
\newtheorem{corollary}{Corollary}
\newtheorem{definition}{Definition}
\newtheorem{example}{Example}
\newtheorem{lemma}{Lemma}
\newtheorem{problem}{Problem}
\newtheorem{remark}{Remark}
\numberwithin{equation}{section}
\DeclareMathOperator{\trace}{tr}
\begin{document}
\title[ ]{The Hornich-Hlawka functional inequality for functions with positive differences}
\author{Constantin P. Niculescu}
\address{Department of Mathematics, University of Craiova, Craiova 200585, Romania}
\email{constantin.p.niculescu@gmail.com}
\author{Suvrit Sra}
\address{Massachusetts Institute of Technology, Cambridge, MA 02139, USA}
\email{suvrit@mit.edu}
\dedicatory{Version 2}\date{January 19, 2023}
\subjclass[2000]{Primary 26B25; Secondary 26B35, 26D15, 26A48, 26A51}
\keywords{Hornich-Hlawka functional inequality, completely monotone functions, function
with positive differences, higher order convexity, positive (semi)definite matrix.}

\begin{abstract}
We analyze the role played by $n$-convexity for the fulfillment of a series of
linear functional inequalities that extend the Hornich-Hlawka functional
inequality, $f\left(  x\right)  +f\left(  y\right)  +f\left(  z\right)
+f\left(  x+y+z\right)  \geq f\left(  x+y\right)  +f\left(  y+z\right)
+f\left(  z+x\right)  +f(0),$ including extensions to the case of positive operators.
\end{abstract}
\maketitle

\section{Introduction}
Many noteworthy inequalities are related to the following problem:

\begin{problem}
Suppose that $\mathcal{S}$ is an abelian additive semigroup with neutral
element $0,$ $f$ a function defined on $\mathcal{S}$ and taking values in an
ordered vector space $E$ $($or in its positive cone $E_{+}).$ For $n\geq2,$
find the linear inequalities relating%
\[
\sum\nolimits_{i=1}^{n}f(x_{i}),\text{ }\sum\nolimits_{1\leq i<j\leq n}%
f(x_{i}+x_{j}),\ldots,\text{ }f(\sum\nolimits_{i=1}^{n}x_{i})
\]
for all $x_{1},\ldots,x_{n}\in\mathcal{S}.$
\end{problem}

Due to its many ramifications, this problem is still the subject of intense
activity, and the present paper reports some new results in this direction, under the umbrella of the \emph{Hornich-Hlawka functional inequality.} Specifically, we are interested in studying
conditions under which a continuous function $f:\mathcal{S}\rightarrow
\mathbb{R}_{+}$ satisfies
\begin{equation}
f\left(  x\right)  +f\left(  y\right)  +f\left(  z\right)  +f\left(
x+y+z\right)  \geq f\left(  x+y\right)  +f\left(  y+z\right)  +f\left(
z+x\right)  \label{HHfunct_ineq+}%
\end{equation}
for all $x,y,z\in\mathcal{S}$. When $f$ is a real-valued function it is usual
to replace (\ref{HHfunct_ineq+}) by
\begin{equation}
f\left(  x\right)  +f\left(  y\right)  +f\left(  z\right)  +f\left(
x+y+z\right)  \geq f\left(  x+y\right)  +f\left(  y+z\right)  +f\left(
z+x\right)  +f(0). \label{HHfunct_ineq}%
\end{equation}

A good start for understanding the Hornich-Hlawka functional inequality is
provided by the following elementary (but powerful) inequality:%
\begin{equation}
|x|+|y|+|z|+|x+y+z|\geq|x+y|+|y+z|+|z+x|\text{\quad for all }x,y,z\in
\mathbb{R}. \label{ineqHHelem}%
\end{equation}
As was noticed by Levi \cite{Levi}, every piecewise linear inequality like
(\ref{ineqHHelem}) remains true when the real variables $x,y,z$ are replaced
by arbitrary vectors $\mbf{x},$ $\mbf{y},$ $\mbf{z}$ in $\mathbb{R}^{N}$ and the absolute value function is replaced
by the Euclidean norm,%
\begin{equation}
\left\Vert \mbf{x}\right\Vert +\left\Vert \mbf{y}\right\Vert +\left\Vert
\mbf{z}\right\Vert +\left\Vert \mbf{x}+\mbf{y}+\mbf{z}\right\Vert
\geq\left\Vert \mbf{x}+\mbf{y}\right\Vert +\left\Vert \mbf{y}%
+\mbf{z}\right\Vert +\left\Vert \mbf{z}+\mbf{x}\right\Vert .
\label{ineq_HH}%
\end{equation}
Inequality~\eqref{ineq_HH} is what is nowadays known as the \emph{Hornich-Hlawka inequality}. See
the paper of Hornich~\cite{Ho1942}, which includes the marvelous argument of
Hlawka, based on the triangle inequality and an identity (due to Fr\'{e}chet
\cite{Fr1935}) which characterizes inner product spaces.

Using a standard technique, one can easily infer from (\ref{ineqHHelem}) that
the Hornich-Hlawka inequality (\ref{ineq_HH}) also works for all Lebesgue
spaces $L^{1}(\mu)$, and so also for all spaces that can be embedded linearly and
isometrically into an $L^{1}(\mu)$. The latter comment includes all
Lebesgue spaces $L^{p}(\mu)$ with $p\in\lbrack1,2])$---see Lindenstrauss and
Pe\l czy\'{n}ski \cite{LP}.

In 1946, Popoviciu~\cite{Pop1946} proved that every continuous function
$f:\mathbb{R}_{+}\rightarrow\mathbb{R}$ that vanishes at the origin and admits
a nondecreasing derivative of second order on $(0,\infty)$, verifies the
Hornich-Hlawka functional inequality (\ref{HHfunct_ineq+}). One can easily put
Popoviciu's result in full generality by showing that actually all continuous
$3$-convex functions on $\mathbb{R}_{+}$ taking values in an ordered Banach
space verify inequality (\ref{HHfunct_ineq}). This fact and its analogue
in the case of continuous $n$-convex functions,%
\begin{multline*}
f\bigl(\sum\nolimits_{i=1}^{n}x_{i}\bigr)-\sum\nolimits_{1\leq i_{1}<\cdots<i_{n-1}\leq
n}f(x_{i_{1}}+\cdots+x_{i_{n-1}})\\
+\sum\nolimits_{1\leq i_{1}<\cdots<i_{n-2}\leq n}f(x_{i_{1}}+\cdots
+x_{i_{n-2}})-\cdots+(-1)^{n-1}\sum\nolimits_{i=1}^{n}f(x_{i})\geq f(0),
\end{multline*}
will be the subject of Section 3.

Close to the above inequality is the
characterization of the property of $n$-convexity via differences $\left(
\Delta_{h}f\right)  (x)=f(x+h)-f(x)$, rather than via divided differences as is
usual. See Theorem~\ref{thmH_P_BW_K}, which expresses the identity of the class
of continuous $n$-convex functions with the class of continuous functions
having positive differences of order $n$ in the sense that $\Delta_{x_{1}%
}\Delta_{x_{2}}\cdots\Delta_{x_{n}}f(x)\geq0.$ The connection with Popoviciu's
inequality is evident when $n$ is an odd integer because the condition
$\Delta_{x_{1}}\Delta_{x_{2}}\cdots\Delta_{x_{n}}f(x)\geq0,$ simply means the
introduction of a new variable in Popoviciu's inequality as follows:%
\begin{gather*}
\sum\nolimits_{i=1}^{n}f(x_{i}+x)-\sum\nolimits_{1\leq i<j\leq n}f(x_{i}%
+x_{j}+x)\\
+\sum\nolimits_{1\leq i<j<k\leq n}f(x_{i}+x_{j}+x_{k}+x)-\cdots\\
\hspace{1.5in}+(-1)^{n-1}f(x_{1}+\cdots+x_{n}+x)\geq f(x).
\end{gather*}

It is worth noticing that the functions $f:\mathbb{R}_{+}\rightarrow
\mathbb{R}_{+}$ that have positive differences of any order are precisely the
absolutely monotonic functions in the terminology of Bernstein~\cite{Bern1929}. Leaving the elegant framework of analysis on intervals one easily discovers
that $n$-convexity and the property of having positive differences of order
$n$ are different concepts. This idea is detailed at the end of Section 3.

Two important classes of functions that mix a string of properties of
$n$-convexity are those of completely monotone functions and of Bernstein
functions. See Section 2 for their definitions and some examples. Sendov and
Zitikis \cite{SZ2014} prove that these functions verify inequalities of the
form
\begin{equation}
\sum\nolimits_{i=1}^{n}f(x_{i})-\sum\nolimits_{1\leq i<j\leq n}f(x_{i}%
+x_{j})+\cdots+(-1)^{n-1}f\bigl(\sum\nolimits_{i=1}^{n}x_{i}\bigr)\geq0,\label{ineq_SZ}%
\end{equation}
for all $x_{1},\ldots,x_{n}$ in $\mathbb{R}_{+}$ and $n\geq1$. Their proof
combines the classical integral representation theorems (respectively the
Bernstein theorem and the L\'{e}vy--Khintchine representation theorem) with
some probabilistic considerations. In Section 4 we extend this result as a
double inequality that holds for completely monotone functions defined on
cones. Combining this result with \cite[Theorem 1.3~(a)]{SS2014}, 
we then show that the function $f(X)=(\det X)^{-\rho}$ (defined on
$N\times N$-real symmetric positive definite matrices) also
verifies the whole string of inequalities (\ref{ineq_SZ}) if $\rho\in\left\{
0,1/2,1,3/2,...\right\}  \cup\lbrack\left(  N-1\right)  /2,\infty)$.

Section~\ref{sec:cones} considers functions defined on cones and having positive
differences of a certain order $n>0.$ A surprising result is Theorem
\ref{thm_n-mon of det}, which shows that the function $\det$ has positive
differences of any order (though it is not completely monotonic). Probably the
same happens for other immanants function (like the permanents), but we were
able to prove only the positivity of differences of order 3. [TODO TODO].


For the reader's convenience, background on higher order convexity and the theory of ordered Banach spaces is summarized in Section~\ref{sec:back}.

\section{Preliminaries}
\label{sec:back}
The study of higher order convexity was initiated by Hopf \cite{Hopf} and
Popoviciu \cite{Pop34,Pop1944}, who defined it in terms of divided
differences of a function. Assuming $f$ a real-valued function defined on a
real interval $I$, the divided differences of order $0,1,\ldots,n$ associated
to a family $x_{0},x_{1},\ldots,x_{n}$ of $n+1$ distinct points are
respectively defined by the formulas%
\begin{align*}
\lbrack x_{0};f]  &  =f(x_{0})\\
\lbrack x_{0},x_{1};f]  &  =\frac{f(x_{1})-f(x_{0})}{x_{1}-x_{0}}\\
&  ...\\
\lbrack x_{0},x_{1},...,x_{n};f]  &  =\frac{[x_{1},x_{2},...,x_{n}%
;f]-[x_{0},x_{1},...,x_{n-1};f]}{x_{n}-x_{0}}\\
&  =%
{\displaystyle\sum\nolimits_{j=0}^{n}}
\frac{f(x_{j})}{\prod\nolimits_{k\neq j}\left(  x_{j}-x_{k}\right)  }.
\end{align*}
Notice that all these divided differences are invariant to permutations
of the points $x_{0},x_{1},...,x_{n}.$ As a consequence, we may always assume that
$x_{0}<x_{1}<\cdots<x_{n}.$

A function $f$ is called $n$-\emph{convex }(respectively\emph{ }%
$n$-\emph{concave}) if all divided differences $[x_{0},x_{1},\ldots,x_{n};f]$
are nonnegative (respectively nonpositive). In particular, $0$-convex
functions are precisely the nonnegative functions, $1$-convex
functions the nondecreasing ones, while $2$-convex
functions are simply the usual convex functions.

If $f$ is $n$ times differentiable, then a repeated application of Lagrange's
mean value theorem yields the existence of a point $\xi\in\left(  \min
_{k}x_{k},\max_{k}x_{k}\right)  $ such that%
\[
\lbrack x_{0},x_{1},...,x_{n};f]=\frac{f^{(n)}(\xi)}{n!}.
\]

As a consequence, one obtains the following practical criterion of $n$-convexity.

\begin{lemma}
\label{lemH&P}Every continuous function $f$ defined on an interval $I$ which
is $n$ times differentiable on the interior of $I$ is $n$-convex provided that
$f^{(n)}\geq0$.
\end{lemma}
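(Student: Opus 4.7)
The plan is simply to read the lemma off from the mean value representation already recorded immediately above its statement, namely
\[
[x_{0},x_{1},\ldots,x_{n};f]=\frac{f^{(n)}(\xi)}{n!}
\]
for some $\xi\in(\min_{k}x_{k},\max_{k}x_{k})$. Given any $n+1$ distinct points $x_{0}<x_{1}<\cdots<x_{n}$ in $I$, the open interval $(\min_{k}x_{k},\max_{k}x_{k})$ is contained in the interior of $I$, where $f^{(n)}$ is defined and nonnegative by hypothesis. Hence $[x_{0},\ldots,x_{n};f]\ge 0$, which is exactly the definition of $n$-convexity.

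If one wants the argument to be self-contained rather than leaning on the displayed identity, I would prove the identity by induction on $n$. The base case $n=1$ is just Lagrange's mean value theorem. For the inductive step, use the recursion
\[
[x_{0},\ldots,x_{n};f]=\frac{[x_{1},\ldots,x_{n};f]-[x_{0},\ldots,x_{n-1};f]}{x_{n}-x_{0}},
\]
apply the inductive hypothesis to write each divided difference on the right as $f^{(n-1)}(\eta_{i})/(n-1)!$ for some $\eta_{0}<\eta_{1}$ in $(\min_{k}x_{k},\max_{k}x_{k})$, and then apply Lagrange's mean value theorem to $f^{(n-1)}$ on $[\eta_{0},\eta_{1}]$ to produce the desired $\xi$ with $f^{(n)}(\xi)=(n-1)!\,([x_{1},\ldots,x_{n};f]-[x_{0},\ldots,x_{n-1};f])/(\eta_{1}-\eta_{0})$; rearranging finishes the induction.

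The only point deserving a brief sanity check is compatibility with the lemma's weaker hypothesis, namely $n$-fold differentiability on $\operatorname{int}I$ rather than on all of $I$. This causes no trouble: the endpoints of $I$ are only ever used through values of $f$ itself (which is continuous there), and the successive applications of Rolle/Lagrange produce intermediate points that always lie strictly inside the convex hull of $\{x_{0},\ldots,x_{n}\}$, hence in $\operatorname{int}I$. So there is no real obstacle; once the mean value identity is granted, the lemma is a one-line corollary.
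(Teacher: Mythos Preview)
Your first paragraph is exactly the paper's argument: the lemma is stated as an immediate consequence of the mean value identity $[x_{0},\ldots,x_{n};f]=f^{(n)}(\xi)/n!$ displayed just above it, with no further proof given. So on that score you match the paper precisely.

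However, the optional self-contained inductive proof you sketch in the second paragraph does not actually close. From the recursion and the induction hypothesis you get
\[
[x_{0},\ldots,x_{n};f]=\frac{f^{(n-1)}(\eta_{1})-f^{(n-1)}(\eta_{0})}{(n-1)!\,(x_{n}-x_{0})},
\]
and Lagrange applied to $f^{(n-1)}$ on $[\eta_{0},\eta_{1}]$ yields $f^{(n-1)}(\eta_{1})-f^{(n-1)}(\eta_{0})=f^{(n)}(\xi)(\eta_{1}-\eta_{0})$. Substituting gives
\[
[x_{0},\ldots,x_{n};f]=\frac{f^{(n)}(\xi)\,(\eta_{1}-\eta_{0})}{(n-1)!\,(x_{n}-x_{0})},
\]
and there is no reason for the ratio $(\eta_{1}-\eta_{0})/(x_{n}-x_{0})$ to equal $1/n$, so ``rearranging'' does not produce $f^{(n)}(\xi)/n!$. (There is also the minor issue that $\eta_{0}<\eta_{1}$ is not guaranteed, since $\eta_{0}\in(x_{0},x_{n-1})$ and $\eta_{1}\in(x_{1},x_{n})$ overlap.) The standard route to the identity is instead to set $g=f-p$ with $p$ the interpolating polynomial of degree $\le n$, observe that $g$ has $n+1$ zeros, and apply Rolle's theorem $n$ times to obtain a zero $\xi$ of $g^{(n)}=f^{(n)}-n!\,[x_{0},\ldots,x_{n};f]$. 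Since the paper takes the identity as given, this does not affect the correctness of your main argument, but the inductive sketch as written should be dropped or replaced.
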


A big source of convex functions of higher order is provided by the Bernstein
functions and the completely monotone functions. Recall that a function
$f:(0,\infty)\rightarrow\mathbb{R}_{+}$ is a \emph{Bernstein function} if it
is infinitely differentiable and verifies the condition
\[
(-1)^{n+1}f^{(n)}(x)\geq0\text{\quad for all }x>0\text{ and }n\geq1\text{;}%
\]
while, the function $f$ is \emph{completely monotone} if instead
\[
(-1)^{n}f^{(n)}(x)\geq0\text{\quad for all }x>0\text{ and }n\geq0.
\]

By definition, a function $f:[0,\infty)\rightarrow\mathbb{R}_{+}$ is a
\emph{Bernstein function} (respectively a \emph{completely monotone }function)
if it is continuous and its restriction of to $(0,\infty)$ has the respective property.

Every Bernstein function is $(2n+1)$-convex and every completely monotone
function is $2n$-convex for every $n\geq0.$

If $f:[0,\infty)\rightarrow\mathbb{R}_{+}$ is a Bernstein function then so is
$f-f(0);$ if $f$ is a completely monotone function then $f(0)-f$ is a
Bernstein function. Some simple examples of Bernstein functions are%
\begin{gather*}
x/(x+1),~1-e^{-\alpha x}\text{ (for }\alpha>0),\text{ }\\
\log(1+x),\text{ }(x-1)/\log x\text{ and }x^{\alpha}\text{ }(\text{for
}0<\alpha\leq1).
\end{gather*}
A nice account of the two aforementioned classes of functions is offered by
the authoritative monograph of Schilling, Song and Vondra\v{c}ek \cite{SSV}.

Besides the five examples mentioned above some other examples \ of $3$-convex
functions on $\mathbb{R}_{+}$ are $x^{\alpha}$ (for $\alpha\in(0,1]\cup
\lbrack2,\infty)),$ $-x^{2}+\sqrt{x},$ $-x\log x,$ $\sinh,$ $\cosh$,
$-\log\left(  \Gamma(x)\right)  $ etc.

The function $1-\left(  x-3\right)  +\frac{\left(  x-3\right)  ^{3}}{6}$ is
continuous and $3$-convex on $\mathbb{R}_{+}$ but not $n$-convex for any
$n\in\left\{  0,1,2\right\}  .$

The polynomials with positive coefficients and the exponential are $n$-convex
for every $n\geq0.$

All polynomials of degree less than or equal to 2 are both $3$-convex and $3$-concave.

The following approximation theorem due to Popoviciu~\cite{Pop35} (see
also \cite[Theorem 1.3.1 $(i)$, pg.~20]{Gal2008}) allows us to reduce reasoning
with $n$-convex functions to the case where they are also differentiable.

\begin{theorem}[Popoviciu's approximation theorem]
  \label{thm_Pop35} If a continuous function $f:[0,1]\rightarrow\mathbb{R}$ is $k$-convex, then so are the
Bernstein polynomials associated to it,
\[
B_{n}(f)(x)=\sum_{i=0}^{n}\binom{n}{i}x^{i}(1-x)^{n-i}f\left(  \frac{i}%
{n}\right)  .
\]
Moreover, by the well-known property of simultaneous uniform approximation of
a function and its derivatives by Bernstein polynomials and their
derivatives, it follows that $B_{n}(f)$ and any derivative (of any order) of
it, converge uniformly to $f$ and to its derivatives, correspondingly.
\end{theorem}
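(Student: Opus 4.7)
The plan is to use the classical formula expressing the $k$-th derivative of a Bernstein polynomial as a weighted Bernstein-type sum of finite differences of the values $f(i/n)$, combined with the identity linking finite differences and divided differences, and then apply Lemma~\ref{lemH&P}.

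First I would recall (or derive by a short induction on $k$) the formula
\[
B_n(f)^{(k)}(x) = \frac{n!}{(n-k)!}\sum_{i=0}^{n-k}\binom{n-k}{i}x^{i}(1-x)^{n-k-i}\,\Delta^{k}_{1/n}f\!\left(\tfrac{i}{n}\right),
\]
where $\Delta^{k}_{h}f(x)=\sum_{j=0}^{k}(-1)^{k-j}\binom{k}{j}f(x+jh)$ is the usual forward difference of step $h$. The key point is the identity
\[
\Delta^{k}_{h}f(x) = k!\,h^{k}\,\bigl[x,x+h,\ldots,x+kh;\,f\bigr],
\]
which ties the equally spaced finite differences to the divided differences from the definition of $k$-convexity.

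Second, since $f$ is $k$-convex, every divided difference $[x,x+h,\ldots,x+kh;f]$ is nonnegative, so $\Delta^{k}_{1/n}f(i/n)\geq 0$ for $0\leq i\leq n-k$. Substituting into the formula above shows $B_n(f)^{(k)}(x)\geq 0$ on $[0,1]$. Because $B_n(f)$ is a polynomial, it is infinitely differentiable, so Lemma~\ref{lemH&P} applies and yields that $B_n(f)$ is itself $k$-convex, as claimed.

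For the second assertion, I would invoke the classical simultaneous-approximation property of Bernstein operators: if $f\in C^{j}([0,1])$, then $B_n(f)^{(j)}$ converges uniformly to $f^{(j)}$ on $[0,1]$; the proof proceeds by rewriting $B_n(f)^{(j)}$ as $\frac{n!}{(n-j)!}B_{n-j}(\Delta^{j}_{1/n}f\circ \tau)$ for a suitable rescaling $\tau$, using the uniform continuity of $f^{(j)}$ to control $n^{j}\Delta^{j}_{1/n}f(i/n)-f^{(j)}(i/n)$, and then applying the usual uniform convergence $B_{n-j}(g)\to g$ for $g$ continuous. The mild subtlety is that the theorem is stated for general continuous $k$-convex $f$ while the simultaneous-derivative approximation requires $f$ itself to possess the corresponding derivative; the point is that the conclusion on derivatives is only used in situations where $f$ does have the relevant regularity, so that the combined statement is exactly what is needed to reduce later arguments about $n$-convex functions to the smooth case. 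I do not see any genuinely hard step: the main obstacle is merely bookkeeping with the forward/divided difference conversion and invoking the right classical result for the convergence of derivatives.
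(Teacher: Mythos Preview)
The paper does not actually prove this theorem; it is stated as a known result with references to Popoviciu~\cite{Pop35} and \cite[Theorem 1.3.1~(i)]{Gal2008}, so there is no in-paper proof to compare against. Your argument is the standard one and is correct: the derivative formula $B_n(f)^{(k)}(x)=\frac{n!}{(n-k)!}\sum_{i=0}^{n-k}\binom{n-k}{i}x^{i}(1-x)^{n-k-i}\Delta^{k}_{1/n}f(i/n)$ together with $\Delta^{k}_{h}f(x)=k!\,h^{k}[x,x+h,\ldots,x+kh;f]$ shows $B_n(f)^{(k)}\geq 0$ whenever $f$ is $k$-convex, and Lemma~\ref{lemH&P} then gives $k$-convexity of $B_n(f)$; the simultaneous-approximation part is likewise the classical fact you cite. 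Your final caveat about the regularity needed for the derivative-convergence statement is well observed and matches how the paper actually uses the theorem (e.g., in the proof of Theorem~\ref{thm_Pop46}, where one reduces to $C^{1}$ functions).
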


Using a change of variable, one can easily see that the approximation theorem
extends to functions defined on compact intervals $[a,b]$ with $a<b.$

\begin{lemma}
\label{lem_comp}$(i)$ The composition of two continuous functions that are
increasing, concave or $3$-convex is a function of the same nature.

$(ii)$ If $f:\mathbb{R}_{+}\rightarrow\mathbb{R}_{+}$ is a continuous
$3$-convex function which is also nondecreasing and concave, then the same
properties hold for $f^{\alpha}$ if $\alpha\in(0,1].$
\end{lemma}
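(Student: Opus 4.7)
The plan is to reduce part $(i)$ to the case where both $f$ and $g$ are smooth by invoking Popoviciu's approximation theorem (Theorem~\ref{thm_Pop35}). Each of the properties in question (monotonicity, concavity, $3$-convexity) is expressible through the sign of a divided difference, so Theorem~\ref{thm_Pop35} shows that the Bernstein polynomials of $f$ and $g$ inherit all three properties simultaneously. Because these Bernstein polynomials, together with their derivatives up to order $3$, converge uniformly on compact subintervals to $f$, $g$ and their respective (generalized) derivatives, it suffices to prove part $(i)$ under the additional assumption that $f$ and $g$ are of class $C^{3}$ on the interior of their domains.

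Under that extra assumption, the chain rule gives
\begin{align*}
(g\circ f)'   &= g'(f)\,f',\\
(g\circ f)''  &= g''(f)\,(f')^{2}+g'(f)\,f'',\\
(g\circ f)''' &= g'''(f)\,(f')^{3}+3g''(f)\,f'\,f''+g'(f)\,f'''.
\end{align*}
The hypotheses yield $f',g'\geq0$, $f'',g''\leq0$ and $f''',g'''\geq0$. Substituting these signs makes $(g\circ f)'\geq0$ and $(g\circ f)''\leq0$ immediate, while in the third-derivative expression each summand is nonnegative: the first because $g'''\geq 0$ and $(f')^{3}\geq 0$; the middle term because it is three times a product of two nonpositive factors and one nonnegative factor; the last because both $g'$ and $f'''$ are nonnegative. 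Hence $(g\circ f)'''\geq0$, and Lemma~\ref{lemH&P} delivers the $3$-convexity of $g\circ f$.

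For part $(ii)$, I apply part $(i)$ with outer function $g(t)=t^{\alpha}$, $\alpha\in(0,1]$. A direct computation on $(0,\infty)$ gives $g'(t)=\alpha t^{\alpha-1}\geq0$, $g''(t)=\alpha(\alpha-1)t^{\alpha-2}\leq0$, and $g'''(t)=\alpha(\alpha-1)(\alpha-2)t^{\alpha-3}\geq0$, the last sign arising because two of the three factors $\alpha,\alpha-1,\alpha-2$ are nonpositive. Together with $g(0)=0$ and continuity, Lemma~\ref{lemH&P} shows that $g$ is continuous, nondecreasing, concave and $3$-convex on $\mathbb{R}_{+}$, so part $(i)$ yields the same four properties for $f^{\alpha}=g\circ f$.

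The main obstacle is the reduction to the smooth case: one has to be sure that Bernstein approximation on each compact subinterval preserves all three properties at once, and that passing to the limit recovers $3$-convexity globally. Once that reduction is in place, the argument collapses to the sign-chasing in the chain rule expansion of $(g\circ f)'''$ and a routine verification of the derivatives of $t\mapsto t^{\alpha}$.
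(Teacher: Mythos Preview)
Your proposal is correct and follows essentially the same approach as the paper: reduce to the $C^{3}$ case via Popoviciu's approximation theorem (Theorem~\ref{thm_Pop35}), then read off the signs of the first three derivatives of $g\circ f$ from the chain rule, and specialize to $g(t)=t^{\alpha}$ for part~$(ii)$. The paper's own proof is just a two-line sketch of exactly this argument, so your write-up simply fills in the details it omits.
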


\begin{proof}
According to Theorem~\ref{thm_Pop35}, we may reduce the proof to the case
where the involved functions are also of class $C^{3}$. In this case the proof
can be completed by computing the sign of the derivatives of order 1, 2 and 3.
\end{proof}

The $n$-convex functions taking values in an ordered Banach space can be
introduced in the same manner as real-valued $n$-convex functions by
using divided differences. We recall useful definitions below.

Recall that an \emph{ordered Banach space} is any Banach space $E$ endowed
with the ordering $\leq$ associated to a closed convex cone $E_{+}$ via the
formula
\[
x\leq y\ \text{if and only if }y-x\in E_{+},~
\]
such that%
\[
E=E_{+}-E_{+},\text{\quad}\left(  -E_{+}\right)  \cap E_{+}=\left\{
0\right\}  ,\text{ }%
\]
and%
\[
0\leq x\leq y\ \text{in}~E~\text{implies }\left\Vert x\right\Vert
\leq\left\Vert y\right\Vert .
\]
The basic facts concerning the theory of ordered Banach spaces are made
available by the book of Schaefer and Wolff \cite{SW}. See \cite{NO2020} for a
short overview centered on two important particular cases: $\mathbb{R}^{n},$
the $n$-dimensional Euclidean space endowed with the coordinate-wise ordering,
and $\operatorname*{Sym}(n,\mathbb{R)}$ the ordered Banach space of all
$n\times n$ symmetric matrices with real coefficients endowed with
the operator norm%
\[
\left\Vert A\right\Vert =\sup_{\left\Vert x\right\Vert \leq1}\left\vert
\langle Ax,x\rangle\right\vert,
\]
and the L\"{o}wner (partial) ordering,%
\[
A\leq B\text{ if and only if }\langle A\mbf{x},\mbf{x}\rangle\leq\langle
B\mbf{x},\mbf{x}\rangle\text{ for all }\mbf{x}\in\mathbb{R}^{n}.
\]

Here the operator norm can be replaced by any Schatten norm, in particular
with the \emph{Frobenius norm},
\[
\left\Vert A\right\Vert _{F}=\bigl(  \sum\nolimits_{i=1}^{N}\sum\nolimits_{j=1}^{N}a_{ij}%
^{2}\bigr)^{1/2}.
\]
The Frobenius norm is associated to the trace inner product%
\[
\langle A,B\rangle=\operatorname*{trace}(AB).
\]

The positive cone of $\mathbb{R}^{n}$ is the first orthant $\mathbb{R}_{+}%
^{n},$ while the positive cone of $\operatorname*{Sym}(n,\mathbb{R)}$ is the
set $\operatorname*{Sym}^{+}(n,\mathbb{R)}$ consisting of all positive
semi-definite matrices. We denote by $\mathbb{R}_{++}^{n}$ and
$\operatorname*{Sym}^{++}(n,\mathbb{R)}$ respectively the interior of
$\mathbb{R}_{+}^{n}$ and $\operatorname*{Sym}^{+}(n,\mathbb{R).}$

\begin{remark}
\label{rem_order}Much of the study of vector-valued convex functions can be
reduced to that of real-valued functions.\ Indeed, in any ordered Banach space
$E,$ any inequality of the form$\ u\leq v$ is equivalent to $x^{\ast}(u)\leq
x^{\ast}(v)$ for all $x^{\ast}\in E_{+}^{\ast}$.

As a consequence, a function $f:I\rightarrow E$ is respectively nondecreasing,
convex or $n$-convex if and only if $x^{\ast}\circ f$ has this property
whenever $x^{\ast}\in E^{\ast}$ is a positive functional. For $E=\mathbb{R}%
^{n},$ this inequality reduces to the components of $f$.
\end{remark}

Combining Remark \ref{rem_order} with Lemma \ref{lemH&P} one obtains the
following practical test of $3$-convexity for vector-valued differentiable functions:

\begin{theorem}
\label{lem_vector_3conv}Suppose that $f$ is a continuous function defined on
an interval $I$ and taking values in an ordered Banach space $E$. If $f$ is
three times differentiable on the interior of $I$ and $f^{\prime\prime\prime
}\geq0,$ then $f$ is a $3$-convex function.
\end{theorem}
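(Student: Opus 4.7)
The plan is to reduce the vector-valued statement to the scalar criterion provided by Lemma~\ref{lemH&P}, using the duality characterization of the order in $E$ highlighted in Remark~\ref{rem_order}. Concretely, I would invoke the fact that $u\leq v$ in $E$ is equivalent to $x^{\ast}(u)\leq x^{\ast}(v)$ for every $x^{\ast}\in E_{+}^{\ast}$. Since divided differences are linear in $f$, this immediately implies that $f:I\to E$ is $3$-convex if and only if $x^{\ast}\circ f$ is $3$-convex for every $x^{\ast}\in E_{+}^{\ast}$.

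The main body of the argument is then to verify the hypotheses of Lemma~\ref{lemH&P} for the scalar function $g=x^{\ast}\circ f$. First I would note that $g$ is continuous on $I$ because $x^{\ast}$ is continuous. Second, since $x^{\ast}$ is a bounded linear functional, it commutes with the operations of taking difference quotients and passing to limits; hence on the interior of $I$ the function $g$ is three times differentiable with
\[
g^{(k)}(t)=x^{\ast}\bigl(f^{(k)}(t)\bigr),\qquad k=1,2,3.
\]
Because $x^{\ast}\in E_{+}^{\ast}$ and $f^{\prime\prime\prime}(t)\in E_{+}$ by hypothesis, we obtain $g^{\prime\prime\prime}(t)=x^{\ast}(f^{\prime\prime\prime}(t))\geq 0$ throughout the interior of $I$.

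Applying Lemma~\ref{lemH&P} now yields that each such $g$ is $3$-convex. Translating back via Remark~\ref{rem_order}, this says that every divided difference $[x_{0},x_{1},x_{2},x_{3};f]\in E$ is annihilated from below by every positive functional, hence belongs to $E_{+}$, so $f$ itself is $3$-convex.

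There is essentially no serious obstacle here: the only point worth double-checking is that $x^{\ast}$ genuinely commutes with differentiation, which is a standard consequence of the continuity of $x^{\ast}$ applied to the Newton quotient $\bigl(f(t+h)-f(t)\bigr)/h$. Everything else is a direct transcription of the scalar lemma through the duality pairing.
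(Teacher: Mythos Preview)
Your argument is correct and follows exactly the route the paper indicates: the theorem is stated there as an immediate consequence of combining Remark~\ref{rem_order} with Lemma~\ref{lemH&P}, and you have simply written out the details of that combination. There is nothing to add.
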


An example illustrating Theorem \ref{lem_vector_3conv} is provided by the
function%
\[
f:\mathbb{R}_{+}\rightarrow\operatorname*{Sym}(n,\mathbb{R)},\text{\quad
}f(t)=-e^{-tA},
\]
associated to a positive semi-definite matrix $A\in\operatorname*{Sym}%
(n,\mathbb{R)}.$ This function is of class $C^{\infty}$ and its first three
derivatives are given by the formulas%
\[
f^{\prime}(t)=Ae^{-tA},\text{\quad}f^{\prime\prime}(t)=-A^{2}e^{-tA}%
,\text{\quad}f^{\prime\prime\prime}(t)=A^{3}e^{-tA}.
\]
Thus $f$ is nondecreasing, concave and 3-convex (according to the
ordering of $\operatorname*{Sym}(n,\mathbb{R))}.$ The matrix $A^{3}e^{-tA}$
is positive semidefinite since the product of commuting positive
semi-definite matrices is also positive semidefinite.

\section{The functional inequality of Popoviciu}

Popoviciu \cite{Pop1946} published in 1946 a short note on a functional
inequality that we restate here in a slightly more general form.

\begin{theorem}
\label{thm_Pop46}Suppose that $E$ is an ordered Banach space and
$f:[0,A]\rightarrow E$ is a continuous $n$-convex function $(n\geq1).$ Then%
\begin{multline*}
f(\sum\nolimits_{i=1}^{n}x_{i})-\sum\nolimits_{1\leq i_{1}<\cdots<i_{n-1}\leq
n}f(x_{i_{1}}+\cdots+x_{i_{n-1}})\\
+\sum\nolimits_{1\leq i_{1}<\cdots<i_{n-2}\leq n}f(x_{i_{1}}+\cdots
+x_{i_{n-2}})-\cdots+(-1)^{n-1}\sum\nolimits_{i=1}^{n}f(x_{i})\geq
(-1)^{n-1}f(0),
\end{multline*}
for all $x_{1},x_{2},...,x_{n}\geq0$ with $\sum\nolimits_{i=1}^{n}x_{i}\leq
A.$
\end{theorem}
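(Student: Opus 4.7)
The plan is, after reducing to a smooth real-valued $f$ with $f^{(n)}\ge 0$, to recognize the asserted alternating sum as a single iterated forward difference of $f$ evaluated at $0$, and then to represent that difference as an integral of $f^{(n)}$ over an $n$-dimensional box.

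First I would reduce to the scalar case via Remark~\ref{rem_order}: the inequality is equivalent to its image under every positive functional $x^{*}\in E_{+}^{*}$, so we may take $E=\mathbb{R}$. Then, using Theorem~\ref{thm_Pop35} after the affine change of variable $[0,A]\to[0,1]$, the Bernstein polynomials $B_{m}(f)$ form a sequence of $n$-convex polynomials converging uniformly to $f$, so by passing to uniform limits at the end it suffices to prove the inequality when $f$ is additionally a polynomial. In that case the mean-value formula $[x_{0},\ldots,x_{n};f]=f^{(n)}(\xi)/n!$ combined with $n$-convexity forces $f^{(n)}\ge 0$ on $(0,A)$.

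Second, I would expand the iterated forward difference operator $\Delta_{x_{1}}\Delta_{x_{2}}\cdots\Delta_{x_{n}}$, with $(\Delta_{h}g)(x)=g(x+h)-g(x)$, by a straightforward inclusion-exclusion:
\begin{equation*}
(\Delta_{x_{1}}\Delta_{x_{2}}\cdots\Delta_{x_{n}}f)(0)=\sum_{S\subseteq\{1,\ldots,n\}}(-1)^{n-|S|}f\bigl(\sum\nolimits_{i\in S}x_{i}\bigr).
\end{equation*}
Isolating the $S=\emptyset$ contribution $(-1)^{n}f(0)$ and multiplying through by the correct sign, the asserted inequality collapses to the single statement
\begin{equation*}
(\Delta_{x_{1}}\cdots\Delta_{x_{n}}f)(0)\ge 0.
\end{equation*}

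Finally, applying the one-variable identity $\Delta_{h}g(x)=\int_{0}^{h}g'(x+t)\,dt$ inductively to each variable yields the integral representation
\begin{equation*}
(\Delta_{x_{1}}\cdots\Delta_{x_{n}}f)(0)=\int_{0}^{x_{1}}\!\!\cdots\!\int_{0}^{x_{n}}f^{(n)}(t_{1}+\cdots+t_{n})\,dt_{n}\cdots dt_{1},
\end{equation*}
which is $\ge 0$ because $t_{1}+\cdots+t_{n}\le\sum x_{i}\le A$ keeps the argument inside $[0,A]$ and $f^{(n)}\ge 0$. The bulk of the work is bookkeeping rather than conceptual difficulty; the only genuinely interesting step is the inclusion-exclusion identity that recasts Popoviciu's alternating sum as the single finite difference $\Delta_{x_{1}}\cdots\Delta_{x_{n}}f(0)$, and the mildly delicate point is checking that Bernstein approximation on $[0,A]$ preserves $n$-convexity together with $n$-fold differentiability, which is exactly the content of Theorem~\ref{thm_Pop35}.
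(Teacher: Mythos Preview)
Your argument is correct. Both proofs share the same preliminary reductions (Remark~\ref{rem_order} to pass to $E=\mathbb{R}$, Theorem~\ref{thm_Pop35} to pass to smooth functions), and both implicitly rest on the identification of Popoviciu's alternating sum with $(\Delta_{x_1}\cdots\Delta_{x_n}f)(0)$, which the paper records as Lemma~\ref{lem_iter}. Where you diverge is in the core step: the paper argues by induction on $n$, differentiating the alternating sum in $x_1$ and applying the induction hypothesis to the $n$-convex function $f'$ to see that the sum is nondecreasing in $x_1$ and vanishes at $x_1=0$. You instead unroll this induction in one stroke, writing $(\Delta_{x_1}\cdots\Delta_{x_n}f)(0)=\int_0^{x_1}\!\cdots\!\int_0^{x_n}f^{(n)}(t_1+\cdots+t_n)\,dt_n\cdots dt_1$ and reading off nonnegativity from $f^{(n)}\ge 0$. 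Your route is shorter and yields an explicit Peano-type remainder, at the cost of requiring $C^n$ smoothness up front (harmless here since the Bernstein polynomials are polynomials); the paper's inductive version needs only $C^1$ at each step and the fact that the derivative of an $(n{+}1)$-convex function is $n$-convex, which is perhaps closer in spirit to Popoviciu's original 1946 argument.
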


Notice that this inequality can be reformulated as%
\[%
{\displaystyle\sum\limits_{\varepsilon_{1},...,\varepsilon_{n}\in\left\{
0,1\right\}  }}
\left(  -1\right)  ^{n-\left(  \varepsilon_{1}+\cdots+\varepsilon_{n}\right)
}f\left(  \varepsilon_{1}x_{1}+\cdots+\varepsilon_{n}x_{n}\right)  \geq0.
\]

Popoviciu supplied the details only in the case $n=3$, for functions $f$ that
vanish at the origin and admit a nondecreasing second order derivative.

The proof of Theorem \ref{thm_Pop46} is by induction, starting with the following instance of 
Hardy-Littlewood-P\'{o}lya's majorization inequality (see \cite[Theorem 4.1.3, pg.~186]{NP2018}):

\begin{lemma}
\label{lem_maj}\emph{If }$g:\left[  a,b\right]  \rightarrow%
\mathbb{R}
$ \emph{is a continuous convex function and }$c$ and $d$ are two points in
$\left[  a,b\right]  $ such that $a+b=c+d,$ then%
\[
g(c)+g(d)\leq g(a)+g(b).
\]

\end{lemma}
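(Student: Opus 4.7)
The plan is to reduce the statement to a single application of the definition of convexity, using the affine symmetry built into the hypothesis $a+b=c+d$.

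First, I would reorder the pair $(c,d)$ so that $c\le d$; then, since $c,d\in[a,b]$ and $c+d=a+b$, one automatically has the chain $a\le c\le (a+b)/2 \le d\le b$ (because $c\le (a+b)/2$ would follow from $c\le d$ together with $c+d=a+b$, and then $d=a+b-c\le b$ forces $c\ge a$, etc.). Next, I would write $c$ as an explicit convex combination of the endpoints: set
\[
t=\frac{b-c}{b-a}\in[1/2,1],
\]
so that $c=tA+(1-t)b$ with $A=a$, i.e.\ $c=ta+(1-t)b$. The key observation is that the identity $c+d=a+b$ then forces
\[
d=a+b-c=(1-t)a+tb,
\]
so $c$ and $d$ are convex combinations of $a,b$ with \emph{complementary} weights $(t,1-t)$ and $(1-t,t)$.

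At this point the argument is immediate: applying the convexity of $g$ to each of these convex combinations gives
\[
g(c)\le t\,g(a)+(1-t)\,g(b),\qquad g(d)\le (1-t)\,g(a)+t\,g(b),
\]
and summing the two inequalities makes the weights collapse to $1$, yielding
\[
g(c)+g(d)\le g(a)+g(b).
\]
There is no real obstacle here — the only thing to be slightly careful about is the initial bookkeeping that legitimizes the assumption $c\le d$ and ensures $t\in[0,1]$; after that, the proof is a one-line use of the definition of convexity combined with the symmetry $a+b=c+d$. (One could equivalently cite this as the two-point case of Hardy–Littlewood–Pólya majorization, since $(a,b)$ majorizes $(c,d)$ precisely when $c,d\in[\min(a,b),\max(a,b)]$ and $c+d=a+b$.)
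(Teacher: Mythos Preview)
Your proof is correct. The paper does not actually prove this lemma; it simply states it as ``an instance of Hardy--Littlewood--P\'olya's majorization inequality'' with a reference to \cite[Theorem~4.1.3]{NP2018}. Your direct argument---writing $c$ and $d$ as convex combinations of $a,b$ with complementary weights $(t,1-t)$ and $(1-t,t)$ and summing the two convexity inequalities---is precisely the standard elementary proof of this two-point case, and you yourself note the majorization interpretation at the end. One trivial caveat: your formula $t=(b-c)/(b-a)$ presupposes $a<b$; the degenerate case $a=b$ (forcing $c=d=a=b$) should be disposed of separately, but this is immediate.
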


\begin{proof}[\textbf{Proof of Theorem~\ref{thm_Pop46}}] The case $n=1$ is trivial since 1-convexity is equivalent to the fact that $f$ is nondecreasing. For $n=2$ the inequality 
under attention reads as%
\[
f(x_{1})+f(x_{2})\leq f(x_{1}+x_{2})+f(0),
\]
which follows from Lemma \ref{lem_maj}. Suppose thus that the statement of Theorem \ref{thm_Pop46} holds for all continuous $n$-convex functions and all $x_{1},x_{2},...,x_{n}\geq0$ with
$\sum\nolimits_{i=1}^{n}x_{i}\leq A.$

Let $f$ be a continuous $\left(  n+1\right)  $-convex function. According to
Remark \ref{rem_order} we may assume that $f$ is real-valued, while
Popoviciu's approximation theorem (Theorem~\ref{thm_Pop35}) allows us to
restrict ourselves functions of class $C^{1}.$ Then
$f^{\prime}$ is continuous and $n$-convex and the same is true for the
function $\varphi(x)=f^{\prime}(x_{1}+x)$. According to the induction
hypothesis, if $x_{1},...,x_{n},x_{n+1}\geq0$ and $x_{1}+\cdots+x_{n+1}\leq
A,$ we have%
\[
f^{\prime}(\sum\nolimits_{i=1}^{n+1}x_{i})-\sum\nolimits_{2\leq i_{1}%
<\cdots<i_{n-1}\leq n+1}f^{\prime}(x_{1}+x_{i_{1}}+\cdots+x_{i_{n-1}})
\]

\[
+\sum\nolimits_{2\leq i_{1}<\cdots<i_{n-2}\leq n+1}f^{\prime}(x_{1}+x_{i_{1}%
}+\cdots+x_{i_{n-2}})
\]

\[
-\cdots+(-1)^{n-1}\sum\nolimits_{2\leq j\leq n+1}f^{\prime}(x_{1}+x_{j}%
)\geq(-1)^{n-1}f^{\prime}(x_{1}).
\]

Similar inequalities occur by permuting the variables.

Consider $x_{2},x_{3},\ldots,x_{n+1}~$fixed in $[0,A]$ and $x_{1}\geq0$
variable such that $x_{1}+\cdots+x_{n+1}\leq A.$ The function $F$ defined by
the formula%
\begin{multline*}
F\left(  x_{1}\right)  =f(\sum\nolimits_{i=1}^{n+1}x_{i})-\underset{1\leq
i_{1}<\cdots<i_{n}\leq n+1}{\sum}f(x_{i_{1}}+\cdots+x_{i_{n}})+\cdots+\\
+(-1)^{n-1}\underset{1\leq i<j\leq n+1}{\sum}f(x_{i}+x_{j})+(-1)^{n}%
\sum\nolimits_{i=1}^{n+1}f(x_{i})+\left(  -1\right)  ^{n+1}f\left(  0\right)
\end{multline*}
is differentiable and, according to the induction hypothesis,%
\begin{multline*}
F^{\prime}\left(  x_{1}\right)  =f^{\prime}\left(  \sum\nolimits_{i=1}%
^{n+1}x_{i}\right)  -\underset{2\leq i_{1}<\cdots<i_{n+1}\leq n+1}{\sum
}f^{\prime}(x_{1}+x_{i_{1}}+\cdots+x_{i_{n-1}})+\\
+(-1)^{n-1}\underset{2\leq j\leq n+1}{\sum}f^{\prime}(x_{1}+x_{j}%
)+(-1)^{n}f^{\prime}(x_{1})\geq0.
\end{multline*}
Therefore $F$ is a nondecreasing function, whence $F\left(  x_{1}\right)  \geq
F\left(  0\right)  =0$. In conclusion $F$ is a nonnegative function and the
proof is done.
\end{proof}

It is worth noticing that Popoviciu's inequality can be turned into a
characterization of $n$-convexity using difference operators.

The \emph{difference operators} $\Delta_{h}$ $($of step size $h\geq0)$ can be
introduced in a large category of situations including the case of functions
defined on $n$-dimensional intervals or on convex cones, ordered abelian
semigroups, etc. They associate to each such function $f$, the function
$\Delta_{h}f$ defined by%
\[
\left(  \Delta_{h}f\right)  (x)=f(x+h)-f(x),
\]
for all $x$ and $h$ such that the right-hand side formula makes sense.

Clearly, difference operators are linear and commute with each other,%
\[
\Delta_{h_{1}}\Delta_{h_{2}}=\Delta_{h_{2}}\Delta_{h_{1}}.
\]

They also verify the following property of invariance under translation:%
\[
\Delta_{h}\left(  f\circ T_{a}\right)  =\left(  \Delta_{h}f\right)  \circ
T_{a},
\]
where $T_{a}$ is the translation defined by the formula $T_{a}(x)=x+a.$

\begin{lemma}
\label{lem_iter}If $n$ is a positive integer, then the following formula
holds:
\[
\Delta_{h_{1}}\Delta_{h_{2}}\cdots\Delta_{h_{n}}f(x)=%
{\displaystyle\sum\limits_{\varepsilon_{1},...,\varepsilon_{n}\in\left\{
0,1\right\}  }}
\left(  -1\right)  ^{n-\left(  \varepsilon_{1}+\cdots+\varepsilon_{n}\right)
}f\left(  x+\varepsilon_{1}h_{1}+\cdots+\varepsilon_{n}h_{n}\right)  .
\]

\end{lemma}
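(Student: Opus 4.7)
The plan is a straightforward induction on $n$, since the right-hand side is just a sum over subsets of $\{1,\dots,n\}$ with alternating signs, and the single-step difference operator picks up exactly one such subset at a time.

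For the base case $n=1$, the sum on the right has only two terms, indexed by $\varepsilon_1\in\{0,1\}$, with signs $(-1)^{1-\varepsilon_1}$. The $\varepsilon_1=1$ term contributes $+f(x+h_1)$ and the $\varepsilon_1=0$ term contributes $-f(x)$, which matches the definition $(\Delta_{h_1}f)(x)=f(x+h_1)-f(x)$.

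For the inductive step, assume the formula holds for $n$ and apply $\Delta_{h_1}$ to the function $g:=\Delta_{h_2}\cdots\Delta_{h_{n+1}}f$. Using the commutativity of difference operators noted just before the lemma, $\Delta_{h_1}\Delta_{h_2}\cdots\Delta_{h_{n+1}}f(x)=g(x+h_1)-g(x)$. Expanding $g$ by the inductive hypothesis at the points $x+h_1$ and $x$ gives two sums over $(\varepsilon_2,\dots,\varepsilon_{n+1})\in\{0,1\}^n$. The $g(x+h_1)$ sum, where we record $\varepsilon_1=1$, carries the sign $(-1)^{n-(\varepsilon_2+\cdots+\varepsilon_{n+1})}=(-1)^{(n+1)-(1+\varepsilon_2+\cdots+\varepsilon_{n+1})}$, which is exactly $(-1)^{(n+1)-(\varepsilon_1+\cdots+\varepsilon_{n+1})}$. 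The $-g(x)$ sum, where we record $\varepsilon_1=0$, carries the sign $-(-1)^{n-(\varepsilon_2+\cdots+\varepsilon_{n+1})}=(-1)^{(n+1)-(0+\varepsilon_2+\cdots+\varepsilon_{n+1})}$, again matching $(-1)^{(n+1)-(\varepsilon_1+\cdots+\varepsilon_{n+1})}$. Together these two halves exhaust all tuples $(\varepsilon_1,\dots,\varepsilon_{n+1})\in\{0,1\}^{n+1}$, each appearing exactly once, which is the desired identity at level $n+1$.

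There is no real obstacle here; the only thing requiring care is the bookkeeping of signs, i.e.\ checking that the exponent $n-(\varepsilon_2+\cdots+\varepsilon_{n+1})$ provided by the inductive hypothesis, combined with the extra factor $\pm 1$ coming from $g(x+h_1)-g(x)$, produces the correct exponent $(n+1)-(\varepsilon_1+\cdots+\varepsilon_{n+1})$ in both cases $\varepsilon_1=1$ and $\varepsilon_1=0$. The argument makes no use of any structural property of $f$ beyond the ability to form the translates $f(x+\varepsilon_1 h_1+\cdots+\varepsilon_n h_n)$, so it applies verbatim in all the settings (intervals, cones, abelian semigroups) mentioned before the statement.
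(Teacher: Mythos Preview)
Your proof is correct and follows exactly the approach indicated in the paper, which simply states ``The proof is immediate, by mathematical induction.'' You have supplied the routine details the paper omits; the mention of commutativity is harmless but not actually needed, since applying $\Delta_{h_1}$ to $g=\Delta_{h_2}\cdots\Delta_{h_{n+1}}f$ is just the definition of the outermost operator.
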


The proof is immediate, by mathematical induction.

The property of convexity of a continuous function $f$ defined on an interval
$I$ can be characterized via the difference operators as follows:

\begin{theorem}
\label{thm_2conv=2mon}A continuous function $f:I\rightarrow\mathbb{R}$ is
convex if and only if the following inequality holds,%
\begin{equation}
\Delta_{a}\Delta_{b}f(x)=f(a+b+x)-f(a+x)-f(b+x)+f(x)\geq0 \label{ineq_2-mon}%
\end{equation}
at all interior points $x\in I$ and all $a,b\geq0$ for which $x+a+b\in I.$
\end{theorem}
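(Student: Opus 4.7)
My plan is to prove the two implications separately, leveraging the majorization lemma (Lemma~\ref{lem_maj}) for the ``only if'' direction and invoking midpoint convexity plus continuity for the ``if'' direction.

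For the forward implication, I would assume $f$ is convex and fix interior $x$ together with $a,b\geq0$ such that $x+a+b\in I$. Without loss of generality, take $a\leq b$, so the four points $x\leq x+a\leq x+b\leq x+a+b$ lie in $I$, and they satisfy $(x+a)+(x+b)=x+(x+a+b)$. Applying Lemma~\ref{lem_maj} on the interval $[x,x+a+b]$ with the inner pair $c=x+a,\ d=x+b$ immediately yields $f(x+a)+f(x+b)\leq f(x)+f(x+a+b)$, which is exactly $\Delta_a\Delta_b f(x)\geq 0$.

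For the converse, I would specialize the hypothesis to $a=b=h\geq0$ with $x+2h\in I$, which gives
\[
f(x+2h)-2f(x+h)+f(x)\geq 0,
\]
i.e., $f\bigl(\tfrac{x+(x+2h)}{2}\bigr)\leq\tfrac{f(x)+f(x+2h)}{2}$. Reparametrizing by $u=x,\ v=x+2h$ (an arbitrary pair of interior points with $u<v$), this is the midpoint (Jensen) convexity inequality $f\bigl(\tfrac{u+v}{2}\bigr)\leq\tfrac{f(u)+f(v)}{2}$ on the interior of $I$. Since $f$ is assumed continuous, the classical theorem of Jensen--Sierpi\'nski upgrades midpoint convexity to full convexity on $\operatorname{int}I$, and continuity then extends convexity to the endpoints of $I$ that happen to belong to it.

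The only mildly delicate point is ensuring the endpoints of $I$ are properly handled: the hypothesis is stated at \emph{interior} points $x$, so the midpoint convexity I derive is only interior at first. This is not a real obstacle, however, because continuity of $f$ on all of $I$ transfers the Jensen inequality to the closure by a straightforward limit argument, and the convexity inequality at boundary points then follows. I do not foresee any other difficulty; both directions are short and use only tools already recorded in the excerpt.
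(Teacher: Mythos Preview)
Your proof is correct and follows essentially the same route as the paper: the forward implication via Lemma~\ref{lem_maj} (using $(x+a)+(x+b)=x+(x+a+b)$) and the converse by specializing $a=b$ to obtain midpoint convexity, then invoking continuity to upgrade to full convexity. Your treatment of the endpoint issue is in fact slightly more careful than the paper's version, but otherwise the arguments coincide.
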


In other words, for continuous functions defined on intervals, convexity is equivalent to the property of having positive differences of second order.

\begin{proof}
The fact that convexity implies the inequality $\Delta_{a}\Delta_{b}f\geq0$ is
a consequence of the Hardy-Littlewood-P\'{o}lya inequality of majorization.
See Lemma \ref{lem_maj}. On the other hand, for $u<v$ arbitrarily fixed in
$I,$ choosing $a=b=\left(  v-u\right)  /2$ and $x=u,$ we infer from
(\ref{ineq_2-mon}) that
\[
\frac{f(u)+f(v)}{2}\geq f\left(  \frac{u+v}{2}\right)  ,
\]
which is equivalent to convexity since $f$ was assumed to be continuous. See
\cite[Theorem 1.1.8, pg.~5]{NP2018}.
\end{proof}

The following result extends Theorem \ref{thm_2conv=2mon} to the case of
higher-order convexity and originates from an old paper of Boas and Widder
\cite{BW1940}.

\begin{theorem}
\label{thmH_P_BW_K}Suppose that $f:[0,A]\rightarrow\mathbb{R}$ is a continuous
function and $n\geq1$ is an integer. Then the following conditions are equivalent:

$(i)$ $f$ is $n$-convex$;$

$(ii)$ $f$ has positive differences of order $n$ in the sense that
\[
\Delta_{x_{1}}\Delta_{x_{2}}\cdots\Delta_{x_{n}}f(t)\geq0
\]
for all points $t,x_{1},x_{2},...,x_{n}\geq0$ such that $t+x_{1}+\cdots
+x_{n}\leq A.$
\end{theorem}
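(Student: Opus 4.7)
The plan is to prove $(i)\Leftrightarrow(ii)$ by reducing each direction to the $C^n$ case via approximation, and then exploiting the Newton--Leibniz style identity
$$
\Delta_{x_1}\Delta_{x_2}\cdots\Delta_{x_n}f(t)=\int_0^{x_1}\!\!\cdots\int_0^{x_n} f^{(n)}(t+s_1+\cdots+s_n)\,ds_n\cdots ds_1,
$$
valid whenever $f\in C^n$, which follows by iterating $\Delta_h g(t)=\int_0^h g'(t+s)\,ds$. Together with the mean-value representation $[x_0,\ldots,x_n;f]=f^{(n)}(\xi)/n!$ and Lemma~\ref{lemH&P}, this identity makes the equivalence transparent for $C^n$ functions, since both $(i)$ and $(ii)$ are then equivalent to the pointwise condition $f^{(n)}\ge 0$ on the interior: the forward direction is immediate from the integrand's sign, and the reverse follows by dividing through by $x_1\cdots x_n$ and letting each $x_i\to 0$.

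For $(i)\Rightarrow(ii)$, I would rescale the domain to $[0,1]$ and invoke Popoviciu's approximation theorem (Theorem~\ref{thm_Pop35}): the Bernstein polynomials $B_m(f)$ are $C^\infty$ and remain $n$-convex, hence satisfy $(ii)$ by the smooth case above. Since the left-hand side of $(ii)$ is a fixed finite linear combination of values of $f$ (Lemma~\ref{lem_iter}), the uniform convergence $B_m(f)\to f$ passes the inequality to $f$ itself.

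For the converse $(ii)\Rightarrow(i)$, I would mollify: fix $\epsilon>0$, pick a nonnegative $\varphi_\epsilon\in C_c^\infty(\mathbb{R})$ supported in $[0,\epsilon]$ with total mass one, and set $f_\epsilon(t)=\int_0^\epsilon f(t+s)\varphi_\epsilon(s)\,ds$, defined on $[0,A-\epsilon]$. Linearity and positivity of the convolution propagate $(ii)$ to $f_\epsilon$ on the subdomain $\{t+x_1+\cdots+x_n\le A-\epsilon\}$; shrinking each $x_i\to 0$ in the integral identity above then yields $f_\epsilon^{(n)}\ge 0$ on the interior of $[0,A-\epsilon]$, and Lemma~\ref{lemH&P} delivers $n$-convexity of $f_\epsilon$ there. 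Letting $\epsilon\to 0$, the uniform convergence $f_\epsilon\to f$ on compact subintervals of $[0,A)$ preserves nonnegativity of divided differences (which are continuous in the tuple of function values), and continuity of $f$ at $A$ extends the conclusion to $[0,A]$. The main obstacle I expect is the careful boundary accounting in the mollification step, namely choosing the support of $\varphi_\epsilon$ so that $(ii)$ genuinely transfers to $f_\epsilon$ on a domain large enough to recover $n$-convexity of $f$ in the limit; once that is set up, the remainder is routine limit passing.
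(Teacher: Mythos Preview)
Your argument is correct and takes a genuinely different route from the paper. The paper derives $(i)\Rightarrow(ii)$ as an immediate consequence of Theorem~\ref{thm_Pop46} applied to the translate $g(x)=f(x+t)-f(t)$, and for $(ii)\Rightarrow(i)$ it simply cites Kuczma's book. Since Theorem~\ref{thm_Pop46} itself is proved by induction on $n$ (combined with Bernstein approximation to reduce to $C^1$), the paper's route to $(i)\Rightarrow(ii)$ is ultimately an inductive one. You instead bypass the induction entirely: the Newton--Leibniz identity
\[
\Delta_{x_1}\cdots\Delta_{x_n}f(t)=\int_0^{x_1}\!\!\cdots\int_0^{x_n} f^{(n)}(t+s_1+\cdots+s_n)\,ds_n\cdots ds_1
\]
makes both $(i)$ and $(ii)$ equivalent to $f^{(n)}\ge 0$ in one stroke for $C^n$ functions, and then Bernstein approximation (for $(i)\Rightarrow(ii)$) and mollification (for $(ii)\Rightarrow(i)$) transfer this to the continuous case. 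Your approach is more self-contained and arguably more transparent, since the integral identity exposes the common mechanism behind both conditions; the paper's approach, on the other hand, is structurally economical given that Theorem~\ref{thm_Pop46} is already in hand, and it keeps the inductive content visible rather than hiding it inside an iterated fundamental theorem of calculus. One minor point worth tightening: to get $f^{(n)}\ge 0$ from $n$-convexity in the smooth case you need the limiting form of the mean-value representation (divided differences over shrinking nodes converge to $f^{(n)}/n!$), not just the existence of some $\xi$; you gesture at this but it deserves a sentence.
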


The same works if the interval $[0,A]$ is replaced by $\mathbb{R}_{+}$ and
$\mathbb{R}.$

\begin{proof}
The implication $(i)\Longrightarrow(ii)$ follows from Theorem \ref{thm_Pop46},
when applied to the $n$-convex function $g(x)=f(x+t)-f(t).$ An alternative
proof is made available by the paper of Boas and Widder \cite{BW1940}.

The converse implication is immediate and is the objective of \cite[Theorem
15.3.1, pg.~430]{Kuc2009} in Kuczma's book. A very short proof of
the implication $(ii)\Longrightarrow(i)$ when $n=3$ can be found in \cite[Proposition 1]{Ben2010}.
\end{proof}

\begin{corollary}
\label{cor1}Suppose that $E$ is an ordered Banach space and
$f:[0,A]\rightarrow E$ is a continuous function. Then $f$ is $n$-convex if and
only if it verifies the inequality%
\[
\Delta_{x_{1}}\Delta_{x_{2}}\cdots\Delta_{x_{n}}f(x)=%
{\displaystyle\sum\limits_{\varepsilon_{1},...,\varepsilon_{n}\in\left\{
0,1\right\}  }}
\left(  -1\right)  ^{n-\left(  \varepsilon_{1}+\cdots+\varepsilon_{n}\right)
}f\left(  x+\varepsilon_{1}x_{1}+\cdots+\varepsilon_{n}x_{n}\right)  \geq0
\]
for all points $x,x_{1},\ldots,x_{n}\in\lbrack0,A]$ such that $x+x_{_{1}%
}+\cdots+x_{n}\leq A$.

In particular, a continuous function $f:[0,A]\rightarrow E$ is $3$-convex if
and only if it verifies the inequality%
\begin{multline*}
f\left(  x+t\right)  +f\left(  y+t\right)  +f\left(  z+t\right)  +f\left(
x+y+z+t\right) \\
\geq f\left(  x+y+t\right)  +f\left(  y+z+t\right)  +f\left(  z+x+t\right)
+f(t)
\end{multline*}
for all points $x,y,z,t\in\lbrack0,A]$ such that $x+y+z+t\leq A$.
\end{corollary}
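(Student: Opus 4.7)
The plan is to reduce the vector-valued statement to its real-valued counterpart and invoke Theorem~\ref{thmH_P_BW_K}. Recall from Remark~\ref{rem_order} that a function $f:[0,A]\rightarrow E$ is $n$-convex precisely when $x^{\ast}\circ f$ is $n$-convex for every $x^{\ast}\in E_{+}^{\ast}$. On the other hand, by the definition of the order on $E$, the vector-valued inequality $\Delta_{x_{1}}\Delta_{x_{2}}\cdots\Delta_{x_{n}}f(x)\geq 0$ is equivalent to the scalar inequality
\[
x^{\ast}\bigl(\Delta_{x_{1}}\Delta_{x_{2}}\cdots\Delta_{x_{n}}f(x)\bigr)\geq 0\quad\text{for every }x^{\ast}\in E_{+}^{\ast}.
\]
Since every positive functional $x^{\ast}$ is linear, it commutes with the difference operators, so the left-hand side equals $\Delta_{x_{1}}\Delta_{x_{2}}\cdots\Delta_{x_{n}}(x^{\ast}\circ f)(x)$. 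Thus both properties in the statement hold for $f$ if and only if the corresponding scalar properties hold for every $x^{\ast}\circ f$.

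Consequently, the first assertion follows by applying Theorem~\ref{thmH_P_BW_K} to each real-valued continuous function $x^{\ast}\circ f$ on $[0,A]$, and then taking the supremum over $x^{\ast}\in E_{+}^{\ast}$ via the ordering characterization of $E$. The only things to verify are that $x^{\ast}\circ f$ remains continuous (immediate from continuity of $x^{\ast}$) and that the reduction covers all admissible choices of $t,x_{1},\ldots,x_{n}$, which is clear because the constraint $x+x_{1}+\cdots+x_{n}\leq A$ does not involve $E$.

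For the particular case $n=3$, I would simply specialize Lemma~\ref{lem_iter}, which expands
\[
\Delta_{x_{1}}\Delta_{x_{2}}\Delta_{x_{3}}f(t)=\sum_{\varepsilon_{1},\varepsilon_{2},\varepsilon_{3}\in\{0,1\}}(-1)^{3-(\varepsilon_{1}+\varepsilon_{2}+\varepsilon_{3})}f(t+\varepsilon_{1}x_{1}+\varepsilon_{2}x_{2}+\varepsilon_{3}x_{3}).
\]
Enumerating the eight terms and collecting signs transforms the inequality $\Delta_{x_{1}}\Delta_{x_{2}}\Delta_{x_{3}}f(t)\geq 0$ into the displayed Hornich-Hlawka-type inequality after identifying $(x,y,z)$ with $(x_{1},x_{2},x_{3})$.

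There is no genuine obstacle to this proof: the only thing that could cause trouble is the interchange of $x^{\ast}$ with the difference operator, but this is a direct consequence of linearity of $x^{\ast}$. Continuity of $f$ passes to $x^{\ast}\circ f$, and the set $E_{+}^{\ast}$ is large enough to separate $E$ because $E_{+}$ is a closed convex cone with $E=E_{+}-E_{+}$ and $(-E_{+})\cap E_{+}=\{0\}$, as recorded in the definition of ordered Banach space. Thus the reduction is clean and the result follows at once from Theorem~\ref{thmH_P_BW_K} and Lemma~\ref{lem_iter}.
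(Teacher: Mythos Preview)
Your proposal is correct and follows exactly the approach implicit in the paper: the corollary is stated without proof immediately after Theorem~\ref{thmH_P_BW_K}, and the intended argument is precisely the reduction to the real-valued case via Remark~\ref{rem_order}, together with the linearity of positive functionals (so that $x^{\ast}$ commutes with the $\Delta_{h}$). Your expansion of the $n=3$ case via Lemma~\ref{lem_iter} is also the natural reading; nothing further is needed.
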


Both Theorem \ref{thm_Pop46} and Theorem \ref{thmH_P_BW_K} can be applied
successfully to derive a number of useful inequalities satisfied by the
Bernstein functions and the completely monotonic functions on $[0,\infty).$ We
will come back to this matter in the next section.

In higher dimensions, the equivalence between usual convexity and the property
of having positive differences of second order is no anymore valid. Some
simple examples are indicated in what follows.

\begin{example}
Consider the case of the infinitely differentiable function
\[
f(x,y)=-2\left(  xy\right)  ^{1/2},\quad x,y\in(0,\infty).
\]
This function is convex, its Hessian being the positive semidefinite matrix
\[
H=\frac{1}{2}\left(
\begin{array}
[c]{cc}%
x^{-3/2}y^{1/2} & -x^{-1/2}y^{-1/2}\\
-x^{-1/2}y^{-1/2} & x^{1/2}y^{-3/2}%
\end{array}
\right)  .
\]
However, $\Phi$ fails the inequality%
\[
\Delta_{A}\Delta_{B}f(X)\geq0\quad\text{for }A,B,X\in(0,\infty)\times
(0,\infty);
\]
for example, choose $A=(1,2),$ $B=(2,1)$ and $X$ near the origin.
\end{example}

\begin{example}
The function
\[
M:\mathbb{R}_{+}^{2}\rightarrow\mathbb{R},\quad M\left(  x,y\right)
=\min\left\{  x,y\right\}
\]
is continuous and concave. Besides it has positive differences of second order
as%
\begin{multline*}
\min\left\{  x+s+u,y+t+v\right\}  -\min\left\{  x+s,y+t\right\}  \\
-\min\left\{  x+u,y+v\right\}  +\min\left\{  x,y\right\}  \geq0,
\end{multline*}
for all $s,t,u,v>0.$ The function $M$ proves useful in statistics as the
Fr\'{e}chet-Hoeffding upper bound for joint distribution functions of random
variables. See Nelsen \cite{Nelsen}.
\end{example}

Popoviciu \cite{Pop34,Pop1944} introduced the concept of higher
order convexity for functions of several variables using multiple divided
differences. To gain some insight, let us consider the case
of a function $f=f(x,y)$ defined on a product $I\times J$ of intervals, and let
$x_{0},x_{1},\ldots,x_{m}$ be distinct points in $I$, and $y_{0},y_{1}%
,\ldots,y_{n}$ be distinct points in $J.$ The \emph{divided double
differences} are defined via the formula%
\begin{align}
\left[
\begin{array}
[c]{cccc}%
x_{0}, & x_{1}, & \ldots & ,~x_{m}\\
y_{0}, & y_{1}, & \ldots & ,\ y_{n}%
\end{array}
;f\right]   &  =[x_{0},x_{2},\ldots,x_{m};[y_{0},y_{1},\ldots,y_{n}%
;f((x,\cdot)]]\label{defdivdiff}\\
&  =[y_{0},y_{1},\ldots,y_{n};[x_{0},x_{1},\ldots,x_{m};f((\cdot
,y)]]\text{.}\nonumber
\end{align}
Notice that this formula is invariant under the permutation of variables
$x_{k}$ (and also under the permutation of the variables $y_{k}).$

Drawing a parallel to the one dimensional case, Popoviciu \cite[pg.~78]{Pop34} calls a function $f:I\times J\rightarrow\mathbb{R}$
\emph{convex} \emph{of order} $(m,n)$ if the divided differences%
\[
\left[
\begin{array}
[c]{cccc}%
x_{0}, & x_{1}, & \ldots & ,~x_{m}\\
y_{0}, & y_{1}, & \ldots & ,~y_{n}%
\end{array}
;f\right]
\]
are nonnegative for all distinct points $x_{0},x_{1},...,x_{m}\in I$ and
$y_{0},y_{1},...,y_{n}\in J$.

Needless to say, the study of this concept of convexity implies a formidable
formalism, so little progress was made since the times of Popoviciu. The only one
recent contribution\ is \cite{GN2019} that studies the cases 
$m=n=1$ and $m=n=2.$ 

\section{The case of completely monotone functions on cones}

The theory of completely monotone functions can be easily extended to the
context of several variables using convex analysis. In what follows $V$
denotes a finite-dimensional real vector space and $\mathcal{C}$ an open convex cone in
$V$ with closure \ $\overline{\mathcal{C}}$. Its dual cone is $\mathcal{C}^{\ast}=\{y\in E^{\ast
}:\langle y,x\rangle\geq0$ for all $x\in \mathcal{C}\}$. The points in $C^{\ast}$ are
linear functionals that are nonnegative on $\overline{\mathcal{C}}$.

\begin{definition}
A function $f:\mathcal{C}\rightarrow$ $\mathbb{R}_{+}$ is called completely monotone if
$f$ is $\mathcal{C}^{\infty}$ on $\mathcal{C}$ and, for all integers $k\geq1$
and all vectors $v_{1},...,$ $v_{k}$ $\in \mathcal{C}$, we have%
\begin{equation}
\left(  -1\right)  ^{k}D_{v_{1}}\cdots D_{v_{k}}f(x)\geq0\text{\quad for all
}x\in \mathcal{C}. \label{def_cm}%
\end{equation}
Here $D_{v}$ denotes the directional derivative along the vector $v$.

A function $f:\overline{\mathcal{C}}\rightarrow$ $\mathbb{R}_{+}$ is called completely
monotone if it is the continuous extension of a completely monotone function
on $\mathcal{C}.$
\end{definition}

When $\mathcal{C}=(0,\infty)^{n},$ the condition (\ref{def_cm}) means that%
\[
\left(  -1\right)  ^{k}\frac{\partial^{k}f}{\partial x_{i_{1}}\partial
x_{i_{2}}\cdots\partial x_{i_{k}}}(x)\geq0
\]
for all $x\in\left(  0,\infty\right)  ^{n}$ and all sets of indices $1\leq
i_{1}\leq i_{2}\leq\cdots\leq i_{k}\leq n$ of arbitrary length $k.$

As in the case of completely monotone functions of one real variable, these
functions can be obtained as Laplace transforms of Borel measures on the dual cone.

\begin{theorem}
$($Bernstein-Hausdorff-Widder-Choquet theorem$)$.\label{thm_BHWCh} Let $f$ be
a nonnegative continuous function on the open convex cone $\mathcal{C}$. Then $f$ is
completely monotone if and only if it is the Laplace transform of a unique
Borel measure $\mu$ supported on the dual cone $\mathcal{C}^{\ast}$, that is,%
\[
f(x)=\int_{\mathcal{C}^{\ast}}e^{-\langle y,x\rangle}\mathrm{d}\mu(y)\text{\quad for all
}x\in \mathcal{C}.
\]

When $f$ admits a continuous extension to $\overline{\mathcal{C}},$ the last equality
works for all $x\in\overline{\mathcal{C}}.$
\end{theorem}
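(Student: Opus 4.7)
The plan is to handle the two directions separately. For the \emph{sufficiency} direction, assume the Laplace representation
\[
f(x) = \int_{\mathcal{C}^{\ast}} e^{-\langle y, x\rangle}\, \mathrm{d}\mu(y).
\]
Since $\mathcal{C}$ is open, for every compact $K \subset \mathcal{C}$ there exists $c>0$ with $\langle y, x\rangle \geq c\|y\|$ for all $x \in K$ and $y \in \mathcal{C}^{\ast}$. Hence the product $|\langle y, v_1\rangle \cdots \langle y, v_k\rangle|\, e^{-\langle y, x\rangle}$ is dominated by an integrable function of $y$ uniformly in $x \in K$, which legitimizes differentiation under the integral sign:
\[
(-1)^k D_{v_1}\cdots D_{v_k} f(x) = \int_{\mathcal{C}^{\ast}} \langle y, v_1\rangle \cdots \langle y, v_k\rangle\, e^{-\langle y, x\rangle}\, \mathrm{d}\mu(y) \geq 0,
\]
since $\langle y, v_j\rangle \geq 0$ whenever $y \in \mathcal{C}^{\ast}$ and $v_j \in \mathcal{C}$.

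For the \emph{necessity} direction, I would use Choquet's integral representation theorem. Fix $x_0 \in \mathcal{C}$ and set
\[
K_{x_0} = \{\, g : \mathcal{C} \to \mathbb{R}_+ \text{ completely monotone} : g(x_0) = 1\,\}.
\]
On the translate $x_0 + \mathcal{C}$, every $g \in K_{x_0}$ satisfies $g(x) \leq g(x_0) = 1$, because $D_v g \leq 0$ for $v \in \mathcal{C}$ makes $g$ nonincreasing along the segment from $x_0$ to $x$. Analogous estimates on derivatives (which are themselves completely monotone up to sign) yield equicontinuity on compacta of $x_0 + \mathcal{C}$, making the restriction of $K_{x_0}$ to $x_0 + \mathcal{C}$ a compact convex subset of $C(x_0 + \mathcal{C})$ in the topology of uniform convergence on compact sets. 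Since this space is metrizable (the cone being second countable), Choquet's theorem produces, for each $g \in K_{x_0}$, a probability measure $\nu_g$ carried by the extreme points of this compact convex set.

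The main obstacle is the identification of the extreme points. For an extreme $g$, the restriction to any ray $t \mapsto g(x_0 + tv)$ with $v \in \mathcal{C}$ is a one-variable completely monotone function equal to $1$ at $t=0$; by Bernstein's classical theorem it is the Laplace transform of a probability measure on $[0,\infty)$, and extremality forces that measure to be a Dirac mass, so $g(x_0 + tv) = e^{-t\sigma(v)}$ for some $\sigma(v) \geq 0$. Compatibility along different rays (using that $t \mapsto g(x_0 + t(v+w))$ is determined by $g$ and by the previous identification applied to the direction $v+w$) shows that $\sigma$ is the restriction to $\mathcal{C}$ of a linear functional $y \in \mathcal{C}^{\ast}$, so $g(x) = e^{-\langle y, x - x_0\rangle}$ on $x_0 + \mathcal{C}$. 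Transporting $\nu_g$ through the parametrization $y \mapsto e^{-\langle y,\,\cdot\, - x_0\rangle}$ to a Borel measure on $\mathcal{C}^{\ast}$ and absorbing the factor $e^{\langle y, x_0\rangle}$ yields the desired representation on $x_0 + \mathcal{C}$, and the real-analyticity of both sides extends it to all of $\mathcal{C}$. Uniqueness of $\mu$ then follows from the injectivity of the Laplace transform on Borel measures supported by $\mathcal{C}^{\ast}$, which may be established by a Stone--Weierstrass argument applied to the algebra generated by $\{e^{-\langle\,\cdot\,, x\rangle} : x \in \mathcal{C}\}$ on the one-point compactification of $\mathcal{C}^{\ast}$.
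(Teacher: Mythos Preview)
The paper does not prove this theorem; it cites Choquet~\cite{Ch1969} and uses the result as a black box. Your Choquet-theoretic strategy is indeed the standard one taken in that reference, but two steps in your sketch do not go through as written.

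The main gap is the identification of extreme points. Extremality of $g$ in $K_{x_0}$ does \emph{not} imply that the one-dimensional slice $t \mapsto g(x_0+tv)$ is extreme among one-variable completely monotone functions: a nontrivial decomposition of its representing measure $\rho_v$ on $[0,\infty)$ does not lift to a decomposition of $g$ on $x_0+\mathcal{C}$, because there is no canonical way to extend the two one-variable pieces to completely monotone functions of several variables. Hence you cannot conclude that $\rho_v$ is a Dirac mass, and your subsequent ``compatibility along different rays'' assertion (which is precisely the additivity $\sigma(v+w)=\sigma(v)+\sigma(w)$) is left without support. The classical argument proceeds instead via multiplicativity: for $h\in\mathcal{C}$ both $g_h:=g(\cdot+h)$ and $g-g_h$ are completely monotone (the latter because each $(-1)^kD_{v_1}\cdots D_{v_k}g$ is nonincreasing along $\mathcal{C}$), so $g=g_h+(g-g_h)$ is a conic decomposition and extremality forces $g_h=c(h)\,g$. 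Evaluating at $x_0$ gives $c(h)=g(x_0+h)$, whence $\phi(h):=g(x_0+h)$ satisfies $\phi(h+h')=\phi(h)\phi(h')$, and continuity yields $\phi(h)=e^{-\langle y,h\rangle}$ for some $y\in\mathcal{C}^{\ast}$. A secondary issue: extending the representation from $x_0+\mathcal{C}$ to all of $\mathcal{C}$ by ``real-analyticity of both sides'' is circular, since analyticity of $f$ on $\mathcal{C}$ is a \emph{consequence} of the integral formula you are trying to prove. The clean fix is to let $x_0\to 0$ along a sequence in $\mathcal{C}$ and invoke the uniqueness of $\mu$ (which you do establish) to see that all the resulting measures coincide, so that the representation holds on $\bigcup_n(x_0^{(n)}+\mathcal{C})=\mathcal{C}$.
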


For details, see Choquet \cite{Ch1969}.

\begin{remark}
\label{rem_cm on cones}Finding the positive Borel measure $\mu$ that makes the
formula of Theorem \emph{\ref{thm_BHWCh}} working represents a practical way
for checking the complete monotonicity of $f$. So is the case of Riesz
kernels: If $\alpha_{1},\alpha_{2},...,\alpha_{N}>0,$ then
\[
x_{1}^{-\alpha_{1}}x_{2}^{-\alpha_{2}}\cdots x_{N}^{-\alpha_{N}}%
=\int_{\mathbb{R}_{++}^{N}}e^{-\langle\mbf{y},\mbf{x}\rangle}\frac
{x_{1}^{-\alpha_{1}}x_{2}^{-\alpha_{2}}\cdots x_{N}^{-\alpha_{N}}}%
{\Gamma(\alpha_{1})\Gamma(\alpha_{2})\cdots\Gamma(\alpha_{N})}\mathrm{d}%
\mbf{y}%
\]
for all $\mbf{x}\in\mathbb{R}_{++}^{N}.$ See \cite[Proposition~2.7]{KMS2019}. A more subtle case is that of inverse powers of the determinant%
\[
f\left(  X)=(\det X\right)  ^{-\rho},\quad X\in\operatorname*{Sym}%
\nolimits^{++}(N,\mathbb{R}),
\]
for which Scott and Sokal \cite{SS2014} have shown that is completely monotone
if and only if $\rho\in\left\{  0,1/2,1,3/2,...,\left(  N-1\right)
/2\right\}  \cup(\left(  N-1\right)  /2,\infty)$. See also \cite[Theorem~4.1]{KMS2019}. It is worth noticing that Siegel established in 1929 the
formula%
\[
\left(  \det A\right)  ^{-\rho}=\int_{\operatorname*{Sym}^{++}(N,\mathbb{R)}%
}e^{-\operatorname*{trace}AX}\frac{\left(  \det X\right)  ^{\rho}~\mathrm{d}%
X}{\pi^{n(n-1)/4}\Gamma(\rho)\Gamma\left(  \rho-1/2\right)  \cdots
\Gamma\left(  \rho-\left(  n-1)/2\right)  \right)  }%
\]
for all $A\in\operatorname*{Sym}^{++}(N,\mathbb{R)}$ and $\rho\geq\left(
N+1\right)  /2.$ See \emph{\cite[Hilfssatz 37, pg.~585]{Siegel}.}
\end{remark}

We next extend (and improve) a result due to Sendov and Zitikis; see
\cite[Theorem 4.1, pg.~76]{SZ2014}.

\begin{theorem}
\label{thm_SZ on cones}Every completely monotone function $f:\mathcal{C}\rightarrow
\mathbb{R}_{+}$ satisfies%
\begin{equation}
\sum\nolimits_{i=1}^{n}f(x_{i})-\sum\nolimits_{1\leq i<j\leq n}f(x_{i}%
+x_{j})+\cdots+(-1)^{n-1}f(\sum\nolimits_{i=1}^{n}x_{i})\geq0, \label{SZ1}%
\end{equation}
for every $x_{1},\ldots,x_{n}\in \mathcal{C}$ and $n\geq1.$ When $f$ admits a continuous
extension to $\overline{\mathcal{C}}$, then then $f$ satisfies the double inequality%
\begin{equation}
f(0)\geq\sum\nolimits_{i=1}^{n}f(x_{i})-\sum\nolimits_{1\leq i<j\leq n}%
f(x_{i}+x_{j})+\cdots+(-1)^{n-1}f(\sum\nolimits_{i=1}^{n}x_{i})\geq0,
\label{SZ2}%
\end{equation}
for every $x_{1},\ldots,x_{n}\in\overline{\mathcal{C}}$ and $n\geq1.$
\end{theorem}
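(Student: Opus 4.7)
The strategy is to use the integral representation from the Bernstein--Hausdorff--Widder--Choquet theorem (Theorem~\ref{thm_BHWCh}) to reduce both inequalities to a single elementary algebraic identity. Write
\[
f(x)=\int_{\mathcal{C}^{\ast}}e^{-\langle y,x\rangle}\,\mathrm{d}\mu(y),
\]
with $\mu$ a positive Borel measure on $\mathcal{C}^{\ast}$. For fixed $x_{1},\ldots,x_{n}\in\mathcal{C}$ and $y\in\mathcal{C}^{\ast}$, set $a_{i}(y)=e^{-\langle y,x_{i}\rangle}\in(0,1]$. By linearity (the sum is finite, so exchange with the integral is immediate) the alternating sum in \eqref{SZ1} equals
\[
\int_{\mathcal{C}^{\ast}}\Bigl(\sum_{\emptyset\neq S\subseteq\{1,\ldots,n\}}(-1)^{|S|-1}\prod_{i\in S}a_{i}(y)\Bigr)\mathrm{d}\mu(y).
\]

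The key algebraic step is the inclusion--exclusion identity
\[
\sum_{\emptyset\neq S\subseteq\{1,\ldots,n\}}(-1)^{|S|-1}\prod_{i\in S}a_{i}=1-\prod_{i=1}^{n}(1-a_{i}),
\]
which follows by expanding $\prod(1-a_{i})$. Since each $a_{i}(y)\in(0,1]$, we have $0\leq 1-a_{i}(y)<1$, hence
\[
0\leq 1-\prod_{i=1}^{n}(1-a_{i}(y))\leq 1
\]
pointwise in $y$. Integrating against the positive measure $\mu$ yields~\eqref{SZ1}.

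For the double inequality \eqref{SZ2}, assume $f$ extends continuously to $\overline{\mathcal{C}}$. Then by Theorem~\ref{thm_BHWCh} the representation is valid at $x=0$, so
\[
f(0)=\int_{\mathcal{C}^{\ast}}\mathrm{d}\mu(y)=\mu(\mathcal{C}^{\ast})<\infty,
\]
i.e.\ $\mu$ is a finite measure. Using the pointwise upper bound $1-\prod(1-a_{i}(y))\leq 1$ and integrating gives the alternating sum $\leq\mu(\mathcal{C}^{\ast})=f(0)$, which is \eqref{SZ2}. The same argument applied to each $x_{i}\in\overline{\mathcal{C}}$ (rather than just $\mathcal{C}$) goes through verbatim, since the representation then holds on all of $\overline{\mathcal{C}}$.

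The only potentially delicate point is the appeal to Theorem~\ref{thm_BHWCh} at boundary points of $\mathcal{C}$: on the open cone the representation and the sign of the integrand are unproblematic, but the upper bound $f(0)$ requires finiteness of $\mu$, which is exactly what continuous extendability of $f$ to the closed cone provides. Everything else is bookkeeping via the inclusion--exclusion identity above.
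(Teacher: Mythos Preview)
Your proposal is correct and follows essentially the same approach as the paper: the paper isolates the inclusion--exclusion computation as a separate Lemma~\ref{lem_BHWCh} (where $P=1-\prod(1-e^{-\alpha_i})$ and $Q=1-P$), then integrates against the Bernstein--Hausdorff--Widder--Choquet representation, exactly as you do inline. The only cosmetic difference is that the paper phrases the upper bound via the nonnegativity of $Q$ rather than the equivalent bound $P\le 1$, but the content is identical.
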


For $n=3,$ the first conclusion of Theorem \ref{thm_SZ on cones} reads as
\[
\sum\nolimits_{i=1}^{3}f(x_{i})-\sum\nolimits_{1\leq i<j\leq3}f(x_{i}%
+x_{j})+f(\sum\nolimits_{i=1}^{3}x_{i})\geq0,
\]
which is nothing but a Hornich-Hlawka type inequality.

The proof of Theorem \ref{thm_SZ on cones} needs the following auxiliary result.

\begin{lemma}
\label{lem_BHWCh}We have%
\[
P=\sum\nolimits_{i=1}^{n}e^{-\alpha_{i}}-\sum\nolimits_{1\leq i<j\leq
n}e^{-(\alpha_{i}+\alpha_{j})}+\cdots+(-1)^{n+1}e^{-\sum\nolimits_{i=1}%
^{n}\alpha_{i}}\geq0
\]
and%
\begin{multline*}
Q=\sum\nolimits_{i=1}^{n}\left(  1-e^{-\alpha_{i}}\right)  -\sum
\nolimits_{1\leq i<j\leq n}\left(  1-e^{-(\alpha_{i}+\alpha_{j})}\right) \\
+\cdots+(-1)^{n+1}\left(  1-e^{-\sum\nolimits_{i=1}^{n}\alpha_{i}}\right)
\geq0,
\end{multline*}
whenever $\alpha_{1},\ldots,\alpha_{n}\in\mathbb{R}_{+}$ and $n\geq2.$
\end{lemma}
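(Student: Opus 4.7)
The plan is to recognize both $P$ and $Q$ as complementary quantities arising from the inclusion--exclusion identity
\[
\prod_{i=1}^{n}(1-t_{i})=\sum_{S\subseteq\{1,\ldots,n\}}(-1)^{\left\vert S\right\vert }\prod_{i\in S}t_{i}.
\]
Setting $t_{i}=e^{-\alpha_{i}}$ and using the multiplicative property $e^{-\sum_{i\in S}\alpha_{i}}=\prod_{i\in S}e^{-\alpha_{i}}$, the sum $P$ rewrites as
\[
P=\sum_{\emptyset\neq S\subseteq\{1,\ldots,n\}}(-1)^{\left\vert S\right\vert +1}\prod_{i\in S}e^{-\alpha_{i}}=1-\prod_{i=1}^{n}\left(1-e^{-\alpha_{i}}\right).
\]
Since $\alpha_{i}\geq0$ forces $1-e^{-\alpha_{i}}\in[0,1)$, the product on the right lies in $[0,1]$, which gives $P\geq0$ at once.

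For $Q$, I would split each summand $1-e^{-\sum_{i\in S}\alpha_{i}}$ into its two pieces, obtaining
\[
Q=\Bigl(\sum_{\emptyset\neq S\subseteq\{1,\ldots,n\}}(-1)^{\left\vert S\right\vert +1}\Bigr)-P.
\]
The bracketed sum equals $1$ by the binomial identity $\sum_{k=0}^{n}(-1)^{k}\binom{n}{k}=0$. Hence $P+Q=1$, and therefore
\[
Q=1-P=\prod_{i=1}^{n}\left(1-e^{-\alpha_{i}}\right)\geq0,
\]
which finishes the argument.

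There is no real obstacle here: the whole lemma collapses to the observation $P+Q=1$ together with the product formula $P=1-\prod(1-e^{-\alpha_{i}})$. The only point requiring mild care is bookkeeping the sign $(-1)^{|S|+1}$ in inclusion--exclusion; verifying the $n=2$ case by hand confirms the signs are consistent with the statement. Both nonnegativity assertions then follow from the trivial estimate $0\leq 1-e^{-\alpha_{i}}<1$.
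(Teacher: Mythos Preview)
Your proof is correct and follows essentially the same route as the paper's own argument: the paper also writes $P=1-\prod_{i=1}^{n}(1-e^{-\alpha_i})$ and obtains $Q=1-P$ via the binomial identity $\sum_{k=1}^{n}(-1)^{k+1}\binom{n}{k}=1$. Your write-up is simply more explicit about the inclusion--exclusion bookkeeping.
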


\begin{proof}
Indeed,%
\[
S=1-%
{\displaystyle\prod\nolimits_{i=1}^{n}}
\left(  1-e^{-\alpha_{i}}\right)  \text{ and }Q=\sum\nolimits_{i=1}%
^{n}(-1)^{k+1}\binom{n}{k}-S=1-S.
\]

\end{proof}

\begin{proof}[\textbf{Proof of Theorem~\ref{thm_SZ on cones}}] As per to Theorem \ref{thm_BHWCh},
$f$ admits the integral representation
\[
f(x)=\int_{\mathcal{C}^{\ast}}e^{-\langle y,x\rangle}\mathrm{d}\mu(y)\text{\quad for all
}x\in \mathcal{C},
\]
where $\mu$ is a Borel measure on $\mathcal{C}^{\ast}.$ Then, taking into account to the
first assertion of Lemma \ref{lem_BHWCh}, we have the inequality
\begin{multline*}
0\leq\int_{\mathcal{C}^{\ast}}\sum\nolimits_{i=1}^{n}e^{-\langle x_{i},y\rangle}%
-\sum\nolimits_{1\leq i<j\leq n}e^{-\langle x_{i}+x_{j},y\rangle}\\
\left.  +\cdots+(-1)^{n-1}e^{-\langle\sum\nolimits_{i=1}^{n}x_{i},y\rangle
}\right]  \mathrm{d}\mu(y)\\
=\sum\nolimits_{i=1}^{n}f(x_{i})-\sum\nolimits_{1\leq i<j\leq n}f(x_{i}%
+x_{j})+\cdots+(-1)^{n-1}f(\sum\nolimits_{i=1}^{n}x_{i}).
\end{multline*}
The case where $f$ is defined on $\overline{\mathcal{C}}$ can be settled in the same
manner, using both assertions of Lemma \ref{lem_BHWCh}.
\end{proof}

Combining Theorem \ref{thm_SZ on cones} with the aforementioned result of
Scott and Sokal (see Remark \ref{rem_cm on cones}) one obtains the following result:

\begin{corollary}
  \label{cor:det}
If $\rho\in\left\{  0,1/2,1,3/2,...\right\}  \cup\lbrack\left(  N-1\right)
/2,\infty)$, then
\begin{multline*}
\sum\nolimits_{i=1}^{n}\det\nolimits^{-\rho}(A_{i})-\sum\nolimits_{1\leq
i<j\leq n}\det\nolimits^{-\rho}(A_{i}+A_{j})\\
+\cdots+(-1)^{n-1}\det\nolimits^{-\rho}(\sum\nolimits_{i=1}^{n}A_{i})\geq0,
\end{multline*}
for every $A_{1},\ldots,A_{n}\in\operatorname*{Sym}\nolimits^{++}%
(N,\mathbb{R})$ and $n\geq1.$
\end{corollary}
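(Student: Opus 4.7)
The plan is to recognize this statement as an immediate corollary obtained by specializing Theorem \ref{thm_SZ on cones} to the open convex cone $\mathcal{C}=\operatorname*{Sym}^{++}(N,\mathbb{R})$ sitting inside the finite-dimensional real vector space $\operatorname*{Sym}(N,\mathbb{R})$, with $f(X)=(\det X)^{-\rho}$. As a first step, I would note that this cone fits exactly the framework developed in Section 4, and that the hypothesis $A_{1},\ldots,A_{n}\in\operatorname*{Sym}^{++}(N,\mathbb{R})$ guarantees that every partial sum $A_{i_{1}}+\cdots+A_{i_{k}}$ belongs again to $\operatorname*{Sym}^{++}(N,\mathbb{R})$, so $f$ is well defined at each argument appearing in the inequality.

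The second step would be to invoke the Scott--Sokal criterion recalled in Remark \ref{rem_cm on cones}: the function $f(X)=(\det X)^{-\rho}$ is completely monotone on $\operatorname*{Sym}^{++}(N,\mathbb{R})$ precisely when $\rho\in\{0,1/2,1,\ldots,(N-1)/2\}\cup((N-1)/2,\infty)$. This set agrees with the range $\{0,1/2,1,3/2,\ldots\}\cup[(N-1)/2,\infty)$ from the statement, since every half-integer strictly larger than $(N-1)/2$ already lies in $[(N-1)/2,\infty)$ and the endpoint $(N-1)/2$ belongs to both descriptions. Complete monotonicity established, inequality (\ref{SZ1}) of Theorem \ref{thm_SZ on cones} applied with $x_{i}=A_{i}$ yields exactly the conclusion sought.

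All substantive mathematics has thus been outsourced to the two results being cited: Theorem \ref{thm_SZ on cones}, which performs the combinatorial alternating-sum manipulation through the Laplace representation supplied by Theorem \ref{thm_BHWCh}, and the considerably deeper Scott--Sokal characterization of the set of admissible exponents. Consequently I anticipate no serious obstacle beyond the parameter-set bookkeeping above; the degenerate case $\rho=0$ reduces to the elementary identity $\sum_{k=1}^{n}(-1)^{k-1}\binom{n}{k}=1\geq 0$ and is absorbed automatically. Notice also that no ``upper'' companion inequality of the form (\ref{SZ2}) is claimed, which is consistent with the fact that $(\det X)^{-\rho}$ does not extend continuously to $\overline{\mathcal{C}}$ when $\rho>0$.
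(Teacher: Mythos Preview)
Your proposal is correct and follows exactly the approach indicated in the paper: it is obtained by combining Theorem~\ref{thm_SZ on cones} with the Scott--Sokal result recalled in Remark~\ref{rem_cm on cones}. The additional bookkeeping you provide (closure of the cone under sums, reconciliation of the two descriptions of the parameter set, the trivial case $\rho=0$) is accurate and simply makes explicit what the paper leaves implicit.
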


Given Corollary~\ref{cor:det} one may wonder whether Theorem~\ref{thm_SZ on cones} also specializes to elementary symmetric polynomials. The situation here turns out to be more subtle, and a qualified answer follows from the discussion below. Recall that for any $m=0,1,...,N,$ the $m$-th elementary symmetric polynomial of
$\mbf{x}\in\mathbb{R}^{N}$ is defined by the formula%
\[
E_{m,N}(\mbf{x})=\sum\nolimits_{1\leq i_{1}<\cdots<i_{m}\leq N}x_{i_{1}%
}...x_{i_{m}}.
\]
Notice that $\det$ and the functions $E_{m,N}$ are hyperbolic polynomials in
the sense of G\aa{}rding. For details concerning this notion see
\cite[Section~4.6]{NP2018}. Kozhasov, Michalek and Sturmfels provided a
constructive proof for the fact that any elementary symmetric polynomial
$E_{m,n}$ admits a real exponent $\alpha^{\prime}>0$ such that $E_{m,n}%
^{-\alpha}$ is completely monotone on $\mathbb{R}_{++}^{N}$ for all
$\alpha\geq\alpha^{\prime}$---see \cite[Theorem 6.4]{KMS2019}. 

\begin{remark}
The restriction of $e^{-x}$ to $\mathbb{R}_{+}$ is a completely monotone
function and thus it verifies the inequality%
\[
e^{-x_{1}}+e^{-x_{2}}+e^{-x_{3}}+e^{-\left(  x_{1}+x_{2}+x_{3}\right)  }\geq
e^{-\left(  x_{1}+x_{2}\right)  }+e^{-\left(  x_{2}+x_{3}\right)
}+e^{-\left(  x_{3}+x_{1}\right)  }.
\]
However there $x_{1},x_{2},x_{3}>0$ such that%
\[
e^{-x_{1}}+e^{-x_{2}}+e^{-x_{3}}+e^{-\left(  x_{1}+x_{2}+x_{3}\right)
}\ngeqslant e^{-\left(  x_{1}+x_{2}\right)  }+e^{-\left(  x_{2}+x_{3}\right)
}+e^{-\left(  x_{3}+x_{1}\right)  }+e^{0}.
\]
This shows that the inequality
\[
\sum\nolimits_{i=1}^{3}f(x_{i})-\sum\nolimits_{1\leq i<j\leq3}f(x_{i}%
+x_{j})+f(\sum\nolimits_{i=1}^{3}x_{i})\geq f(0),
\]
does not characterize the $3$-convexity within the class of continuous functions.
\end{remark}

The fact that the Bernstein functions $f:\mathbb{R}_{+}\rightarrow
\mathbb{R}_{+}$ also verify the inequalities (\ref{SZ1}) follows from Lemma
\ref{lem_BHWCh} and the L\'{e}vy-Khintchine theorem (Theorem 3.2, p. 15 of
\cite{SSV}), which asserts that each such function admits the integral
representation
\[
f(x)=a+bx+\int_{0}^{\infty}(1-e^{-xt})\mathrm{d}\mu(t)
\]
for some constants $a,b\geq0$ and a positive measure $\mu$ on $[0,\infty)$
such that%
\[
\int_{0}^{\infty}\min\left\{  1,t\right\}  \mathrm{d}\mu(t)<\infty.
\]

As far as we know, no attempt was made to extend the theory of Bernstein
functions to the framework of functions defined on cones.

\section{Functions with positive differences on cones}
\label{sec:cones}
The difference operators $\Delta_{h}f:$ $x\rightarrow\Delta_{h}%
f(x)=f(x+h)-f(x)$ are well defined in the case of functions $f$ defined on a
convex cone $\mathcal{C}$. Such a function is said to be a \emph{function with positive
differences of order} $n$ ($n\geq1$) if
\[
\Delta_{h_{1}}\Delta_{h_{2}}\cdots\Delta_{h_{n}}f(x)\geq0\text{ \ for all
}x,h_{1},h_{2},...,h_{n}\in \mathcal{C}.
\]
For convenience, we say that $f$ has \emph{positive differences of order} $0$
if $f\geq0.$ 

In the literature, the concept of $n$-absolute monotonicity is used with the meaning that the function under attention has \emph{positive differences of
order} $k$ for all $k\in\{0,1,...,n\}.$

\begin{theorem}
If $f:\mathcal{C}\rightarrow F_{+}$ is a completely monotonic function, then
$f$ has positive differences of any even order $k=0,2,4,...~;$ under the same
hypothesis, $-f$ has positive differences of any odd order $k=1,3,5,...~.$
\end{theorem}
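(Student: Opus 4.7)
The plan is to express the iterated finite difference $\Delta_{h_{1}}\cdots\Delta_{h_{n}}f(x)$ as an integral of the iterated directional derivative $D_{h_{1}}\cdots D_{h_{n}}f$, on which the hypothesis of complete monotonicity directly imposes a definite sign. Concretely, I would apply the fundamental theorem of calculus $n$ times (invoking commutativity of the $\Delta_{h_{i}}$ and of the $D_{h_{i}}$) to establish
\[
\Delta_{h_{1}}\Delta_{h_{2}}\cdots\Delta_{h_{n}}f(x)=\int_{0}^{1}\!\cdots\!\int_{0}^{1} D_{h_{1}}D_{h_{2}}\cdots D_{h_{n}}f\Bigl(x+\sum_{i=1}^{n}t_{i}h_{i}\Bigr)\,dt_{1}\cdots dt_{n}.
\]
The base step $\Delta_{h}f(x)=\int_{0}^{1}D_{h}f(x+th)\,dt$ is just the fundamental theorem of calculus, and induction takes care of the rest. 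Since $\mathcal{C}$ is a convex cone and $x,h_{1},\ldots,h_{n}\in\mathcal{C}$, the point $x+\sum t_{i}h_{i}$ stays in $\mathcal{C}$ for every $(t_{1},\ldots,t_{n})\in[0,1]^{n}$, so the integrand is well-defined on the entire cube.

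By hypothesis, $(-1)^{n}D_{h_{1}}\cdots D_{h_{n}}f(y)\geq 0$ for every $y\in\mathcal{C}$; hence the integrand multiplied by $(-1)^{n}$ is nonnegative throughout $[0,1]^{n}$, and since nonnegativity is preserved under Lebesgue (or Bochner) integration, we conclude $(-1)^{n}\Delta_{h_{1}}\cdots\Delta_{h_{n}}f(x)\geq 0$. For even $n$ this yields $\Delta_{h_{1}}\cdots\Delta_{h_{n}}f(x)\geq 0$, giving positive differences of order $n$ for $f$; for odd $n$ it gives $\Delta_{h_{1}}\cdots\Delta_{h_{n}}(-f)(x)\geq 0$, which is the stated property of $-f$. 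The degenerate case $n=0$ is immediate from $f\geq 0$.

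In the $F$-valued setting one can sidestep the Bochner machinery entirely by invoking Remark \ref{rem_order}: compose with an arbitrary positive continuous functional $x^{\ast}\in F_{+}^{\ast}$, reduce to the scalar case treated above, and then use that $F_{+}$ is the intersection of the half-spaces $\{u\in F:x^{\ast}(u)\geq 0\}$. An alternative route rests on the Bernstein--Hausdorff--Widder--Choquet representation (Theorem \ref{thm_BHWCh}): plugging $f(x)=\int_{\mathcal{C}^{\ast}}e^{-\langle y,x\rangle}d\mu(y)$ into the identity of Lemma \ref{lem_iter} factorizes the sum over $\{0,1\}^{n}$ into $(-1)^{n}\prod_{i=1}^{n}(1-e^{-\langle y,h_{i}\rangle})$, which is $(-1)^{n}$ times a nonnegative quantity for $y\in\mathcal{C}^{\ast}$ and $h_{i}\in\mathcal{C}$. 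The main obstacle is thus purely technical: justifying the FTC identity for $F_{+}$-valued smooth maps and the sign-preservation of the associated integral. Both are standard, and the reduction to scalars via Remark \ref{rem_order} makes the issue evaporate.
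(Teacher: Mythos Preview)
Your proof is correct and is essentially the paper's approach: both pass from the sign of $(-1)^{n}D_{h_{1}}\cdots D_{h_{n}}f$ to the sign of $(-1)^{n}\Delta_{h_{1}}\cdots\Delta_{h_{n}}f$ via the fundamental theorem of calculus. The paper phrases this as an appeal to Amann's monotonicity criterion (Lemma~\ref{lemAmann}), which is exactly the one-step case of your integral identity, with the iteration left implicit; your explicit $n$-fold integral formula is the same argument fully unwound.
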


This follows easily from the characterization of monotonicity in terms of
G\^{a}teaux differentiability (as was established by Amann \cite{Amann1974},
Proposition 3.2, p. 184):

\begin{lemma}
\label{lemAmann}Suppose that $E$ and $F$ are two ordered Banach spaces,
$\mathcal{C}$ is a convex subset of $E$ with nonempty interior
$\operatorname{int}\mathcal{C}$ and $\Phi:\mathcal{C}\rightarrow F$ is a
function, continuous on $\mathcal{C}$ and G\^{a}teaux differentiable on
$\operatorname{int}\mathcal{C}.$ Then $\Phi$ is monotone nondecreasing on
$\mathcal{C}$ if and only if
\[
D\Phi(a)[v]=\frac{\Phi(a+tv)-\Phi(a)}{t}\geq0
\]
for all points $a\in\operatorname{int}\mathcal{C}$ and all vectors $v\in
E_{+}.$
\end{lemma}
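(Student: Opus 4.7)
The proof divides into a straightforward forward direction and a somewhat more delicate converse that proceeds in two stages.

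For the forward direction, assume $\Phi$ is nondecreasing on $\mathcal{C}$. Fix $a\in\operatorname{int}\mathcal{C}$ and $v\in E_{+}$; for all sufficiently small $t>0$ the point $a+tv$ lies in $\mathcal{C}$, and since $a+tv\geq a$ in $E$, monotonicity gives $\Phi(a+tv)-\Phi(a)\in F_{+}$. Dividing by $t>0$ preserves positivity, and letting $t\to 0^{+}$ uses existence of the G\^{a}teaux derivative together with closedness of $F_{+}$ to conclude $D\Phi(a)[v]\geq 0$.

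For the converse, I would first prove monotonicity on $\operatorname{int}\mathcal{C}$. Given $a,b\in\operatorname{int}\mathcal{C}$ with $v:=b-a\in E_{+}$, set $g(t)=\Phi(a+tv)$ for $t\in[0,1]$. Convexity of $\operatorname{int}\mathcal{C}$ puts the whole segment inside $\operatorname{int}\mathcal{C}$, so $g$ is continuous on $[0,1]$ and G\^{a}teaux differentiable on $(0,1)$ with $g'(t)=D\Phi(a+tv)[v]\geq 0$ by hypothesis. Since a Banach-space-valued mean value equality is not available, I would scalarize: for each positive functional $\phi\in F_{+}^{*}$, the real-valued function $\phi\circ g$ is continuous on $[0,1]$, differentiable on $(0,1)$ with $(\phi\circ g)'(t)=\phi(g'(t))\geq 0$, and hence nondecreasing by the scalar mean value theorem, giving $\phi(g(1)-g(0))\geq 0$. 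Invoking Remark \ref{rem_order}, which asserts that the closed cone $F_{+}$ is characterized by $F_{+}^{*}$ via Hahn-Banach, one concludes $g(1)-g(0)\in F_{+}$, i.e., $\Phi(a)\leq\Phi(b)$.

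To finish, I would extend monotonicity to all of $\mathcal{C}$ by approximation. For arbitrary $a,b\in\mathcal{C}$ with $b-a\in E_{+}$, pick any fixed $c\in\operatorname{int}\mathcal{C}$ and set $a_{\varepsilon}=(1-\varepsilon)a+\varepsilon c$, $b_{\varepsilon}=(1-\varepsilon)b+\varepsilon c$ for $\varepsilon\in(0,1)$. The standard convex-geometry fact that the open segment joining an interior point to any point of a convex set lies in the interior places both $a_{\varepsilon},b_{\varepsilon}$ in $\operatorname{int}\mathcal{C}$, while $b_{\varepsilon}-a_{\varepsilon}=(1-\varepsilon)(b-a)\in E_{+}$. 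The previous step gives $\Phi(a_{\varepsilon})\leq\Phi(b_{\varepsilon})$, and continuity of $\Phi$ on $\mathcal{C}$ together with closedness of $F_{+}$ yield $\Phi(a)\leq\Phi(b)$ in the limit $\varepsilon\to 0^{+}$. The main conceptual hurdle is the scalarization in the second paragraph: one must rule out pathologies by genuinely using the ordered-Banach-space structure of $F$ (closed cone, separation by $F_{+}^{*}$), since G\^{a}teaux differentiability alone is too weak to support a direct vector-valued fundamental theorem of calculus.
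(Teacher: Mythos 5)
Your proof is correct. Note, however, that the paper does not actually prove this lemma: it is quoted as a known result of Amann (Proposition 3.2 of the cited 1974 paper), so there is no internal argument to compare against. Your self-contained proof is the standard one and all the delicate points are handled properly: the forward direction correctly uses closedness of $F_{+}$ to pass to the limit in the difference quotient; the converse correctly reduces the vector-valued mean value problem to scalar calculus by composing with functionals in $F_{+}^{*}$ and then recovering the order relation via the duality $u\leq v\Longleftrightarrow x^{*}(u)\leq x^{*}(v)$ for all $x^{*}\in F_{+}^{*}$ (exactly the content of Remark~\ref{rem_order}); and the extension from $\operatorname{int}\mathcal{C}$ to all of $\mathcal{C}$ via the segments $(1-\varepsilon)a+\varepsilon c$ with $c\in\operatorname{int}\mathcal{C}$ is the right way to use the continuity hypothesis on the boundary. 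One cosmetic remark: the displayed formula in the statement omits the limit $t\to 0^{+}$, and you correctly read $D\Phi(a)[v]$ as that limit; it is worth saying so explicitly, since the identity as literally printed would force $\Phi$ to be affine along rays.
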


\begin{example}
Every function of the form%
\[
\Phi(\mbf{x})=f\left(  \langle\mbf{x},\mbf{w}\rangle\right)
,\quad\mbf{x}\in\mathbb{R}_{+}^{N},
\]
associated to a continuous $n$-convex function $f:\mathbb{R}_{+}%
\mathbb{\rightarrow R}$ and a vector $\mbf{w}\in\mathbb{R}_{+}^{N}$ has
positive differences of order $n$.

The proof is straightforward, as evident from the case $n=3.$ Indeed, in this case,
for every $\mbf{x},\mbf{y},\mbf{z},\mbf{t}\in\mathbb{R}_{+}^{n}$
we have
\begin{multline*}
\Phi\left(  \mbf{x}+\mbf{t}\right)  +\Phi\left(  \mbf{y}%
+\mbf{t}\right)  +\Phi\left(  \mbf{z}+\mbf{t}\right)  +\Phi\left(
\mbf{x}+\mbf{y}+\mbf{z}+\mbf{t}\right) \\
-\Phi\left(  \mbf{x}+\mbf{y}+\mbf{t}\right)  -\Phi\left(
\mbf{y}+\mbf{z}+\mbf{t}\right)  -\Phi\left(  \mbf{z}%
+\mbf{x}+\mbf{t}\right)  -\Phi(\mbf{t})\\
=f\left(  \langle\mbf{x}+\mbf{t},\mbf{v}\rangle\right)  +f\left(
\langle\mbf{y}+\mbf{t},\mbf{v}\rangle\right)  +f\left(
\langle\mbf{z}+\mbf{t},\mbf{v}\rangle\right)  +f\left(
\langle\mbf{x}+\mbf{y}+\mbf{z}+\mbf{t},\mbf{v}\rangle\right) \\
-f\left(  \langle\mbf{x+y+t},\mbf{v}\rangle\right)  -f\left(
\langle\mbf{y+z+t},\mbf{v}\rangle\right)  -f\left(  \langle
\mbf{z+x+t},\mbf{v}\rangle\right)  -f(\langle\mbf{t},\mbf{v}%
\rangle)\\
=f\left(  \langle\mbf{x},\mbf{v}\rangle+\langle\mbf{t},\mbf{v}%
\rangle\right)  +f\left(  \langle\mbf{y},\mbf{v}\rangle+\langle
\mbf{t},\mbf{v}\rangle\right)  +f\left(  \langle\mbf{z}%
,\mbf{v}\rangle+\langle\mbf{t},\mbf{v}\rangle\right) \\
+f\left(  \langle\mbf{x}+\mbf{y}+\mbf{z},\mbf{v}\rangle
+\langle\mbf{t},\mbf{v}\rangle\right)  -f\left(  \langle\mbf{x+y}%
,\mbf{v}\rangle+\langle\mbf{t},\mbf{v}\rangle\right)  -f\left(
\langle\mbf{y+z},\mbf{v}\rangle+\mbf{t},\mbf{v}\rangle\right) \\
-f\left(  \langle\mbf{z+x},\mbf{v}\rangle+\langle\mbf{t}%
,\mbf{v}\rangle\right)  -f(\langle\mbf{t},\mbf{v}\rangle)\geq0.
\end{multline*}
A variant of this example is provided by the map%
\[
\Psi(A)=f\left(  \operatorname*{trace}(AW)\right)  ,\text{\quad}%
A\in\operatorname*{Sym}\nolimits^{+}(n,\mathbb{R)},
\]
associated to a continuous $n$-convex function $f:\mathbb{R}_{+}%
\mathbb{\rightarrow R}$ and to an operator $W\in\mathbb{R}_{+}^{N}.$
The ambient Hilbert space in this case is $\operatorname*{Sym}(n,\mathbb{R)},$
endowed with the Frobenius norm and L\"{o}wner ordering.
\end{example}

Linear algebra offers many examples of functions that have
positive differences of any order $n\geq0.$ So is the case of the determinant
function $\det$, restricted to $\operatorname*{Sym}\nolimits^{+}%
(N,\mathbb{R})$ (though $\det$ it\ is not completely monotonic). 

Clearly, $\det(X)\geq0$ and since $\det$ is monotonic on the semidefinite matrices, 
\[
\left(  \Delta_{A}\det\right)  (X)=\det(X+A)-\det(X)\geq0,
\]
for all $A,X\in\operatorname*{Sym}\nolimits^{+}(N,\mathbb{R}).~$Also simple is
the fact that%
\[
\left(  \Delta_{A}\left(  \Delta_{B}\det\right)  \right)  \left(  X\right)
=\det\left(  A+B+X\right)  -\det\left(  A+X\right)  -\det(B+X)+\det(X)\geq0
\]
for all $A,B,X\in\operatorname*{Sym}\nolimits^{+}(N,\mathbb{R}).$ This inequality is
mentioned in~\cite[Problem 36, pg.~215]{Zhang}. For $X=0$ it reduces to the property of superaditivity of the function $\det,$%
\[
\det(A+B)\geq\det(A)+\det(B).
\]

Our next goal is to show the more challenging third-order inequality%
\[
\Delta_{A}\left(  \Delta_{B}(\Delta_{C}\det\right)  )\geq0.
\]
We require some preparatory lemmas before stating our proof.

\begin{lemma}
  \label{lem:ek}
  Let $A, B, C \in \operatorname*{Sym}\nolimits^{+}(N,\mathbb{R})$, and let $e_k(X)$ denote the $k$-th elementary symmetric function of a matrix $X$ ($0\le k \le N$). Then,
  \begin{equation*}
    e_{k}(A)+e_{k}(B)+e_{k}(C)+e_{k}(A+B+C)\geq e_{k}(A+B)+e_{k}(B+C)+e_{k}(C+A).
  \end{equation*}
\end{lemma}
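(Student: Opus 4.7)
The plan is to reduce the inequality for general $e_{k}$ to the corresponding Hornich--Hlawka inequality for $\det$ (the case $k=N$), and then to prove the latter by polarizing the determinant into mixed discriminants.

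First, I would use the classical identity
\[
e_{k}(X)=\sum\nolimits_{|S|=k}\det(X_{S}),
\]
where $X_{S}$ denotes the principal $k\times k$ submatrix of $X$ indexed by $S\subseteq\{1,\ldots,N\}$. Because principal submatrices of positive semidefinite matrices are again positive semidefinite and $(Y+Z)_{S}=Y_{S}+Z_{S}$, summing the $\det$-inequality applied to the triple $(A_{S},B_{S},C_{S})$ over all $|S|=k$ would yield the desired inequality for $e_{k}$. Thus it would suffice to prove, for every $k\geq 1$ and all $A,B,C\in\operatorname*{Sym}\nolimits^{+}(k,\mathbb{R})$, that
\[
E:=\det(A)+\det(B)+\det(C)+\det(A+B+C)-\det(A+B)-\det(B+C)-\det(A+C)\geq 0
\]
(the case $k=0$ reduces to the trivial $4\geq 3$).

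For the $\det$ case, I would invoke the polarization formula
\[
\det(X_{1}+\cdots+X_{m})=\sum_{\substack{k_{1}+\cdots+k_{m}=N\\k_{i}\geq 0}}\binom{N}{k_{1},\ldots,k_{m}}D(X_{1}^{[k_{1}]},\ldots,X_{m}^{[k_{m}]}),
\]
where $D$ is the mixed discriminant---the unique symmetric multilinear form with $D(X,\ldots,X)=\det(X)$. Expanding each of the seven determinants in $E$ and collecting the coefficient of every monomial $D(A^{[a]},B^{[b]},C^{[c]})$ with $a+b+c=N$, I expect every monomial with some zero exponent to cancel: for $(a,b,c)$ with $c=0$ and $a,b\geq 1$, the coefficient $\binom{N}{a,b,0}$ from $\det(A+B+C)$ is killed by $-\binom{N}{a,b}$ from $-\det(A+B)$; at a corner such as $(N,0,0)$, the four contributions $1,+1,-1,-1$ from $\det(A+B+C),\det A,-\det(A+B),-\det(A+C)$ sum to zero; similarly by the symmetric cases. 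What remains is
\[
E=\sum_{\substack{a+b+c=N\\a,b,c\geq 1}}\binom{N}{a,b,c}D(A^{[a]},B^{[b]},C^{[c]}).
\]

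To conclude, I would appeal to Aleksandrov's classical theorem that $D(A_{1},\ldots,A_{N})\geq 0$ whenever each $A_{i}$ is positive semidefinite. Every summand in the displayed formula for $E$ is then nonnegative, so $E\geq 0$. For $N\in\{1,2\}$ the indexing set is empty and one has $E=0$, which is consistent with $e_{1}=\operatorname{tr}$ being linear and with a direct verification at $N=2$. The main technical point is the combinatorial bookkeeping verifying the cancellation of all the boundary monomials; the nonnegativity of mixed discriminants of PSD matrices and the principal-submatrix reduction are standard inputs.
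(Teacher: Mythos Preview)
Your argument is correct and genuinely different from the paper's. The paper proceeds via the identity $e_{k}(X)=\operatorname*{trace}(\wedge^{k}X)$ together with the representation $\wedge^{k}X=P_{k}^{\ast}(\otimes^{k}X)P_{k}$, and then invokes the operator Hornich--Hlawka inequality for tensor powers \cite[Theorem~2.1]{BS2015}; compressing that operator inequality by $P_{k}$ and taking the trace yields the scalar inequality for $e_{k}$ directly.

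Your route instead passes through the principal-minor identity $e_{k}(X)=\sum_{|S|=k}\det(X_{S})$, reduces to the determinantal Hornich--Hlawka inequality for $k\times k$ positive semidefinite matrices, and settles that via the mixed-discriminant expansion. The cancellation bookkeeping you describe is correct: in the expansion of $E$ the coefficient of $D(A^{[a]},B^{[b]},C^{[c]})$ vanishes whenever $abc=0$, and one is left with $E=\sum_{a,b,c\geq1}\binom{k}{a,b,c}D(A^{[a]},B^{[b]},C^{[c]})\geq0$ by the nonnegativity of mixed discriminants of PSD matrices. This is more self-contained than the paper's proof (no external operator inequality needed), and it yields extra information: the identity makes transparent that $E=0$ for $k\in\{1,2\}$ and exhibits the defect as a positive combination of mixed discriminants for $k\geq3$. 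The paper's approach, by contrast, stays within the tensor-power framework that it reuses for the operator Hornich--Hlawka inequality (Theorem~\ref{thm_OperatorHH}) and its immanant corollaries, so it is better aligned with the rest of the manuscript. One small notational slip: in your polarization step the ambient dimension should be $k$ (the size of the principal submatrices), not $N$.
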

\begin{proof}
  Recall that for an $N\times N$ matrix $X$, we have $e_k(X) = \trace(\wedge^k X)$, where $\wedge$ denotes the anti-symmetric tensor product; moreover, $\wedge^k(X)=P_k^*(\otimes^k X)P_k$ for a suitable projection matrix $P_k$---see e.g.,~\cite[pg.~18]{Bhatia1997} for these facts. Using $P_k$ in Theorem 2.1 in~\cite{BS2015}, the claimed inequality for $e_k$ follows.
\end{proof}

\begin{lemma}
\label{thm3mon}If $A,B,C,X\in\operatorname*{Sym}\nolimits^{+}(N,\mathbb{R}),$
then%
\begin{multline*}
\det\left(  A+X\right)  +\det\left(  B+X\right)  +\det\left(  C+X\right)
+\det(A+B+C+X)\\
\geq\det\left(  A+B+X\right)  +\det\left(  B+C+X\right)  +\det\left(
C+A+X\right)  +\det X.
\end{multline*}

\end{lemma}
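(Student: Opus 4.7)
The plan is to reduce the inequality to a termwise consequence of Lemma~\ref{lem:ek} by factoring out $X$ and expanding the resulting determinants in elementary symmetric polynomials.

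First, by continuity of $\det$, it suffices to prove the inequality when $X$ is strictly positive definite: for general $X\in\operatorname*{Sym}^{+}(N,\mathbb{R})$ we replace $X$ by $X+\varepsilon I$ (which is in $\operatorname*{Sym}^{++}(N,\mathbb{R})$), prove the inequality there, and let $\varepsilon\to 0^{+}$. So assume $X\in\operatorname*{Sym}^{++}(N,\mathbb{R})$.

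Next I normalize by $X$. Set
\[
\tilde A = X^{-1/2}AX^{-1/2},\qquad \tilde B = X^{-1/2}BX^{-1/2},\qquad \tilde C = X^{-1/2}CX^{-1/2},
\]
all of which lie in $\operatorname*{Sym}^{+}(N,\mathbb{R})$. Using the identity $\det(M+X)=\det(X)\det(I+X^{-1/2}MX^{-1/2})$, valid for any $M\in\operatorname*{Sym}^{+}(N,\mathbb{R})$, and dividing both sides of the target inequality by $\det X>0$, the claim reduces to
\begin{multline*}
\det(I+\tilde A)+\det(I+\tilde B)+\det(I+\tilde C)+\det(I+\tilde A+\tilde B+\tilde C)\\
\geq \det(I+\tilde A+\tilde B)+\det(I+\tilde B+\tilde C)+\det(I+\tilde C+\tilde A)+1.
\end{multline*}

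Now I expand each term using the standard identity $\det(I+M)=\sum_{k=0}^{N}e_{k}(M)$, where $e_{k}(M)=\trace(\wedge^{k}M)$ is the $k$-th elementary symmetric function of the eigenvalues of $M$. Splitting off the $k=0$ contribution (which equals $1$ in every term), the left-hand side contributes $4$ and the right-hand side contributes $3+1=4$, so the $k=0$ pieces cancel. What remains is
\[
\sum_{k=1}^{N}\bigl[e_{k}(\tilde A)+e_{k}(\tilde B)+e_{k}(\tilde C)+e_{k}(\tilde A+\tilde B+\tilde C)-e_{k}(\tilde A+\tilde B)-e_{k}(\tilde B+\tilde C)-e_{k}(\tilde C+\tilde A)\bigr]\geq 0,
\]
and each summand is nonnegative by direct application of Lemma~\ref{lem:ek} to the triple $\tilde A,\tilde B,\tilde C\in\operatorname*{Sym}^{+}(N,\mathbb{R})$.

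The only genuinely nontrivial ingredient is Lemma~\ref{lem:ek}, which has already been established via the exterior power representation $\wedge^{k}X=P_{k}^{\ast}(\otimes^{k}X)P_{k}$. Once that lemma is in hand, the main obstacle reduced to bookkeeping: verifying that the $k=0$ terms balance exactly (accounting for the lone $\det X$ on the right of the target inequality) and ensuring the reduction to $X\succ 0$ is justified. Both are straightforward, so the proof is essentially a termwise reduction to Lemma~\ref{lem:ek}.
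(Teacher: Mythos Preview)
Your proof is correct and follows essentially the same approach as the paper: reduce to invertible $X$ by continuity, normalize via $X^{-1/2}(\cdot)X^{-1/2}$ so that $X$ becomes $I$, expand $\det(I+M)=\sum_{k=0}^{N}e_k(M)$, observe that the $k=0$ contributions cancel, and finish by applying Lemma~\ref{lem:ek} termwise. The only cosmetic difference is that the paper packages the $k=0$ cancellation into the auxiliary function $f(A)=\det(A+I)-1$, whereas you track it explicitly.
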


For $X=0$ we get the determinantal Hornich-Hlawka inequality%
\begin{multline*}
\det A+\det B+\det C+\det(A+B+C)\\
\geq\det\left(  A+B\right)  +\det\left(  B+C\right)  +\det\left(  C+A\right)
,
\end{multline*}
first noticed by Lin~\cite{Lin}, who provided a proof based on eigenvalue majorization.

\begin{proof}[\textbf{Proof.}]
Without loss of generality we may assume that $X$ is invertible (for example,
replace $X$ by $X+\varepsilon I$ if necessary). Then
\[
\det(A+X)=\det(X)\det(X^{-1/2}AX^{-1/2}+I),
\]
where $I$ is the identity matrix; the inequality under attention is then
equivalent to
\[%
\begin{split}
\det(A+I) &  +\det(B+I)+\det(C+I)+\det(A+B+C+I)\\
&  \geq\det(A+B+I)+\det(B+C+I)+\det(C+A+I)+1.
\end{split}
\]
Consider the function $f(A):=\det(A+I)-1$. Then the above inequality becomes
\begin{equation}
f(A)+f(B)+f(C)+f(A+B+C)\geq f(A+B)+f(B+C)+f(C+A).\label{eq:13}%
\end{equation}
Now recall the well-known expansion
\[
\det(A+I)=\sum_{k=0}^{N}e_{k}(A),
\]
where $e_{k}(\cdot)$ denotes the $k$-th elementary symmetric polynomial
($e_{0}=1,e_1=\trace...,~e_{N}=\det$)---see \cite[Theorem 7.1.2, pg.~197]{Mirsky} Thus,
$f(A)=\sum_{k=1}^{N}e_{k}(A)$, and inequality~\eqref{eq:13} becomes
\begin{multline*}
\sum_{k=1}^{N}\bigl[e_{k}(A)+e_{k}(B)+e_{k}(C)+e_{k}(A+B+C)\bigr]\\
\geq\sum_{k=1}^{N}\bigl[e_{k}(A+B)+e_{k}(B+C)+e_{k}(C+A)\bigr].
\end{multline*}
The proof ends by applying Lemma~\ref{lem:ek} for each $k\in\{1,...,N\}$.
\end{proof}

A similar argument using \cite[Corollary 3.4]{BS2015}  yields
that the function $\det$ has positive differences of any order.

\begin{theorem}
\label{thm_n-mon of det}We have%
\begin{align*}
&  \Delta_{A_{1}}\left(  \Delta_{A_{2}}(...(\Delta_{A_{n}}\det\right)  ))(X)\\
&  =\sum_{k=0}^{n}(-1)^{k+1}\sum_{1\leq i_{1}<i_{2}<\cdots<i_{k}\leq n}%
\det(A_{i_{1}}+\cdots+A_{i_{k}}+X)\geq0,
\end{align*}
whenever $A_{1},\ldots,A_{n},X\in\operatorname*{Sym}^{+}(N,\mathbb{R)}$ and
$n\geq1.$ In other words, the restriction of the $\det$ function to
$\operatorname*{Sym}^{+}(N,\mathbb{R)}$ has positive differences of any order
$n\geq0.$
\end{theorem}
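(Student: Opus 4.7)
The plan is to adapt the proof of Lemma~\ref{thm3mon} to iterated differences of arbitrary order, with the elementary symmetric polynomials again playing the central role. First I would reduce to the case $X=I$: by continuity in $X$ on the positive semidefinite cone, I may assume $X$ is strictly positive, and then the identity $\det(X+M)=\det(X)\det(I+X^{-1/2}MX^{-1/2})$ applied to each of the $2^n$ summands factors out $\det(X)$ and replaces each $A_i$ by $B_i=X^{-1/2}A_iX^{-1/2}\in\operatorname*{Sym}^{+}(N,\mathbb{R})$. It thus suffices to prove
\[
\sum_{\varepsilon\in\{0,1\}^n}(-1)^{n-|\varepsilon|}\det\bigl(I+\varepsilon_1 B_1+\cdots+\varepsilon_n B_n\bigr)\geq 0.
\]

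Second, I would expand via $\det(I+B)=\sum_{k=0}^{N}e_k(B)$ and exchange the two sums, so that the quantity above becomes $\sum_{k=0}^{N}(\Delta_{B_1}\cdots\Delta_{B_n}e_k)(0)$. The $k=0$ term is the iterated difference of a constant and hence vanishes; thus it suffices to show $(\Delta_{B_1}\cdots\Delta_{B_n}e_k)(0)\geq 0$ for each $k\in\{1,\ldots,N\}$.

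Third, I would invoke the tensor representation $e_k(B)=\trace\bigl(P_k^{*}(\otimes^k B)P_k\bigr)$ already used in Lemma~\ref{lem:ek}. A standard multilinear expansion and inclusion-exclusion calculation give
\[
\sum_{\varepsilon}(-1)^{n-|\varepsilon|}\otimes^k(\varepsilon_1 B_1+\cdots+\varepsilon_n B_n)=\sum_{\sigma\colon[k]\twoheadrightarrow[n]}B_{\sigma(1)}\otimes\cdots\otimes B_{\sigma(k)},
\]
where the right-hand side is understood to be zero if $n>k$ (in which case no surjection exists, and indeed any $n$-th order difference of a polynomial of degree $k$ with $n>k$ vanishes). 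Each summand on the right is a tensor product of positive semidefinite matrices and is therefore positive semidefinite, whence so is their sum. The congruence $S\mapsto P_k^{*}SP_k$ preserves positivity, and taking traces of positive semidefinite operators yields the desired nonnegativity; summing over $k$ then completes the argument.

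The step I expect to require the most care is the third one: establishing the combinatorial identity for the $n$-th difference of $\otimes^k B$ and verifying that the resulting operator on the $k$-fold tensor space is positive semidefinite, so that both the projection by $P_k$ and the trace preserve the inequality. This is precisely the content that the excerpt packages into the invocation of \cite[Corollary~3.4]{BS2015}, which supplies the positivity of exactly this multilinear expression and so bypasses an ad hoc tensorial computation.
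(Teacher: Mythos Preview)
Your proposal is correct and follows essentially the same route as the paper's argument: reduce to $X=I$ by the congruence trick and continuity, expand $\det(I+B)=\sum_k e_k(B)$, and then establish termwise positivity of the $n$-th differences of $e_k$ via the tensor representation $e_k(B)=\trace\bigl(P_k^{*}(\otimes^k B)P_k\bigr)$. The only distinction is that you spell out the inclusion--exclusion/surjection identity for $\sum_\varepsilon(-1)^{n-|\varepsilon|}\otimes^k(\varepsilon\cdot B)$ explicitly, whereas the paper absorbs that step into the citation of \cite[Corollary~3.4]{BS2015}; either way the operator positivity and hence the claim follow.
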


Lemma \ref{thm3mon} can be extended to a larger class of matrix functions,
that of immanants. The \emph{immanant} \emph{function} $d_{\chi}^{G},$
associated to a subgroup $G$ of the symmetric group $\mathcal{S}_{N}$ of $N$
letters and to an irreducible character $\chi$ of $G,$ is defined via the
formula%
\[
d_{\chi}^{G}(A)=\sum\nolimits_{\sigma\in G}\chi(\sigma)\prod\nolimits_{i=1}%
^{N}a_{i~\sigma(i)},~~A\in\operatorname*{Sym}\nolimits^{+}(N,\mathbb{R)}.
\]

When $G=\mathfrak{G}_{m}$ and $\chi(\sigma)=\operatorname*{sgn}\sigma$ we have
$d_{\chi}^{G}(A)=\det A,$ while for $\chi(\sigma)\equiv1$ we obtain the
permanent of $A.$

The following two inequalities%
\begin{gather*}
d_{\chi}^{G}\left(  A+X\right)  -d_{\chi}^{G}(X)\geq0\\
d_{\chi}^{G}\left(  A+B+X\right)  -d_{\chi}^{G}\left(  A+X\right)  -d_{\chi
}^{G}(B+X)+d_{\chi}^{G}(X)\geq0
\end{gather*}
occur for all $A,B,X\in\operatorname*{Sym}\nolimits^{+}(N,\mathbb{R}).$ See
respectively Merris \cite{Merris}, p. 228 and Paksoy, Turkmen and Zhang
\cite{PTZ}. The fact that $d_{\chi}^{G}$ has positive differences of third
order on $\operatorname*{Sym}\nolimits^{+}(N,\mathbb{R})$ makes the objective
of the following result:

\begin{theorem}
\label{thm_imm}If $A,B,C,X\in\operatorname*{Sym}\nolimits^{+}(N,\mathbb{R)},$
then
\[%
\begin{split}
d_{\chi}^{G}(A+X)  &  +d_{\chi}^{G}(B+X)+d_{\chi}^{G}(C+X)+d_{\chi}%
^{G}(A+B+C+X)\\
&  \geq d_{\chi}^{G}(A+B+X)+d_{\chi}^{G}(B+C+X)+d_{\chi}^{G}(C+A+X)+d_{\chi
}^{G}(X).
\end{split}
\]

\end{theorem}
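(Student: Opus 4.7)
My plan is to follow the blueprint of Lemma~\ref{thm3mon}, with two essential modifications. First, since $d_{\chi}^{G}$ is not multiplicative, the reduction to $X=I$ via similarity is unavailable; I will instead work directly with a tensor-product representation. Second, the role of the elementary symmetric functions $e_{k}$ will be played by a family of polynomial functions in $Y$ arising from expanding $(Y+X)^{\otimes N}$ across tensor factors.

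The key reduction is to exhibit the immanant in the form
\[
d_{\chi}^{G}(A) = \trace(Q\cdot A^{\otimes N}),\qquad A\in\operatorname*{Sym}\nolimits^{+}(N,\mathbb{R}),
\]
for some positive semidefinite operator $Q$ on $(\mathbb{R}^{N})^{\otimes N}$. Let $P_{\sigma}$ denote the permutation operator on $(\mathbb{R}^{N})^{\otimes N}$ associated with $\sigma\in G$, and set $T=\sum_{\sigma\in G}\chi(\sigma)P_{\sigma}$. Because $\chi$ is a real-valued irreducible character of $G$, standard representation theory identifies $T$ with $(|G|/\chi(1))\,e_{\chi}$, where $e_{\chi}$ is the orthogonal projection onto the $\chi$-isotypic component of $(\mathbb{R}^{N})^{\otimes N}$; in particular $T\succeq 0$. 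Setting $\xi:=e_{1}\otimes\cdots\otimes e_{N}$, a direct computation on basis tensors yields $d_{\chi}^{G}(A)=\langle\xi,TA^{\otimes N}\xi\rangle$, and using the facts that $TA^{\otimes N}=A^{\otimes N}T$ and $T^{2}=(|G|/\chi(1))\,T$ one rewrites this as $\trace(Q\cdot A^{\otimes N})$ with $Q:=(\chi(1)/|G|)\,|T\xi\rangle\langle T\xi|\succeq 0$.

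Having secured this representation, I expand $(Y+X)^{\otimes N}$ multilinearly across the $N$ tensor factors: for each subset $S\subseteq\{1,\ldots,N\}$ of cardinality $k$, let $\omega_{S}(Y)$ denote the tensor with $Y$ in the positions of $S$ and $X$ elsewhere. Then $(Y+X)^{\otimes N}=\sum_{S}\omega_{S}(Y)$, and therefore
\[
d_{\chi}^{G}(Y+X)=\sum_{S\subseteq\{1,\ldots,N\}}\trace\bigl(Q_{S}\cdot Y^{\otimes k}\bigr),
\]
where $Q_{S}$ is obtained from $Q$ by inserting $X$ into the positions indexed by $S^{c}$ and then partially tracing over those $N-k$ factors. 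The key point is that $Q_{S}$ is again positive semidefinite: for every $\psi\in(\mathbb{R}^{N})^{\otimes k}$,
\[
\langle\psi,Q_{S}\psi\rangle=\trace\bigl(Q\cdot(|\psi\rangle\langle\psi|\otimes X^{\otimes(N-k)})\bigr)\geq 0,
\]
because the right-hand side is the trace of a product of two positive semidefinite operators. I now apply \cite[Theorem~2.1]{BS2015}---the very ingredient used in Lemma~\ref{lem:ek}---to each $Q_{S}$ with $|S|\geq 1$, obtaining the unshifted Hornich-Hlawka inequality for the polynomial $Y\mapsto\trace(Q_{S}Y^{\otimes k})$. Summing these inequalities, and noting that the $S=\emptyset$ summand is the constant $d_{\chi}^{G}(X)$ whose net contribution to the alternating sum is exactly $d_{\chi}^{G}(X)$ and thus cancels the trailing $d_{\chi}^{G}(X)$ on the right-hand side of the target inequality, completes the proof.

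The principal obstacle is verifying the positive semidefiniteness of $T$ (equivalently, of $Q$), which hinges on the representation-theoretic identification of $T$ with a positive scalar multiple of the orthogonal projection onto the $\chi$-isotypic component. This is straightforward when $\chi$ is real-valued---the only case in which the immanant of a real symmetric matrix is itself a real number---but would require additional care for complex characters (where one would decompose $\chi$ into real and imaginary parts or pair $\chi$ with $\overline{\chi}$).
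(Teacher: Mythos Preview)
Your proof is correct but takes a different route from the paper. Both start from the same representation---your $d_{\chi}^{G}(A)=\trace(Q\,A^{\otimes N})$ with $Q$ rank-one positive semidefinite is exactly the trace form of the paper's formula~(\ref{repr_dG})---but diverge thereafter. The paper deduces Theorem~\ref{thm_imm} from the \emph{shifted} operator Hornich--Hlawka inequality (Theorem~\ref{thm_OperatorHH}), which it proves via a monotonicity argument and the inductive Lemma~\ref{lem_main}. You instead expand $(Y+X)^{\otimes N}$ multilinearly into pieces $\omega_{S}(Y)$ and reduce each one, via partial trace against the fixed $X$-factors, to the \emph{unshifted} operator inequality of \cite[Theorem~2.1]{BS2015}, bypassing Theorem~\ref{thm_OperatorHH} and Lemma~\ref{lem_main} entirely. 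Your route is shorter for the scalar immanant inequality; the paper's route establishes the operator-level Theorem~\ref{thm_OperatorHH} as a result of independent interest. In fact your expansion idea, lifted to the operator level (each $\omega_{S}(Y)$ is unitarily conjugate to $Y^{\otimes|S|}\otimes X^{\otimes(N-|S|)}$, and tensoring a L\"owner inequality with a fixed positive semidefinite factor preserves it), furnishes an alternative and arguably simpler proof of Theorem~\ref{thm_OperatorHH} itself. One minor point: your closing caveat about complex characters is unnecessary. For symmetric $A$ the pairing $\sigma\leftrightarrow\sigma^{-1}$ together with $\chi(\sigma^{-1})=\overline{\chi(\sigma)}$ already forces $d_{\chi}^{G}(A)\in\mathbb{R}$, and $T=\sum_{\sigma}\chi(\sigma)P_{\sigma}=\bigl(|G|/\chi(1)\bigr)\,e_{\overline{\chi}}$ is a positive multiple of an orthogonal projection for every irreducible $\chi$, so $Q\succeq 0$ holds without restriction.
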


Using arguments from multilinear algebra one can show that there exists a
matrix $Z_{G,\chi}$ such that%
\begin{equation}
d_{\chi}^{G}(X)=Z_{G,\chi}^{\ast}\left(  \otimes^{N}X\right)  Z_{G,\chi
}.\label{repr_dG}%
\end{equation}
See \cite{Mar}, p. 126. The representation (\ref{repr_dG}) turns the assertion
of Theorem \ref{thm_imm} into an immediate consequence of the following
general result:

\begin{theorem}
\label{thm_OperatorHH}$($Operator Hornich-Hlawka inequality$)$. If
$A,B,C,X\in\operatorname*{Sym}\nolimits^{+}(N,\mathbb{R)}$ and $p\geq1$ an
integer, then
\begin{equation}%
\begin{split}
\otimes^{p}(A+X) &  +\otimes^{p}(B+X)+\otimes^{p}(C+X)+\otimes^{p}(A+B+C+X)\\
&  \geq\otimes^{p}(A+B+X)+\otimes^{p}(B+C+X)+\otimes^{p}(C+A+X)+\otimes^{p}X,
\end{split}
\label{ineq_operator HH}%
\end{equation}
in the L\"{o}wner order. Here $\otimes$ denotes the usual tensor product.
\end{theorem}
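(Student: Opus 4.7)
My plan is to expand both sides of \eqref{ineq_operator HH} via the multinomial formula for tensor powers and reduce the inequality to a manifestly positive sum of Kronecker products.

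First, I would apply the identity
\[
\otimes^{p}(Y_{1}+\cdots+Y_{k})=\sum_{w\in\{Y_{1},\ldots,Y_{k}\}^{p}}w_{1}\otimes w_{2}\otimes\cdots\otimes w_{p}
\]
to rewrite each of the eight terms of \eqref{ineq_operator HH} in a common ``tensor monomial'' basis. Index these eight terms by subsets $S\subseteq\{A,B,C\}$, so the term in question is $\otimes^{p}\bigl(X+\sum_{M\in S}M\bigr)$ and its sign in LHS$-$RHS is $(-1)^{|S|+1}$ (check: $|S|=0$ contributes $-\otimes^{p}X$, the three $|S|=1$ cases contribute with $+$, the three $|S|=2$ cases with $-$, and $|S|=3$ with $+$). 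For a word $w=(w_{1},\ldots,w_{p})\in\{A,B,C,X\}^{p}$ set $T(w):=\{w_{1},\ldots,w_{p}\}\cap\{A,B,C\}$; then the monomial $w_{1}\otimes\cdots\otimes w_{p}$ appears in the expansion of $\otimes^{p}\bigl(X+\sum_{M\in S}M\bigr)$ exactly when $T(w)\subseteq S$.

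Second, the net coefficient of $w_{1}\otimes\cdots\otimes w_{p}$ in LHS$-$RHS is therefore
\[
c(w)=\sum_{S\supseteq T(w)}(-1)^{|S|+1}=(-1)^{|T(w)|+1}(1-1)^{3-|T(w)|},
\]
by the elementary identity $\sum_{k=0}^{m}\binom{m}{k}(-1)^{k}=0^{m}$. Hence $c(w)=1$ when $T(w)=\{A,B,C\}$ and $c(w)=0$ otherwise. After this cancellation the claimed inequality collapses to the exact identity
\[
\text{LHS}-\text{RHS}=\sum_{\substack{w\in\{A,B,C,X\}^{p}\\ \{A,B,C\}\subseteq\{w_{1},\ldots,w_{p}\}}}w_{1}\otimes w_{2}\otimes\cdots\otimes w_{p}.
\]

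Third, since $A,B,C,X\in\operatorname*{Sym}\nolimits^{+}(N,\mathbb{R})$ and the Kronecker product preserves positive semidefiniteness, every summand on the right is positive semidefinite, and so is the sum; this yields \eqref{ineq_operator HH} in the L\"{o}wner order. Note that for $p<3$ the index set is empty and one obtains equality, consistent with the fact that the third-order finite difference of a polynomial of degree less than $3$ vanishes. The main obstacle, conceptually, is to recognise that the triple difference of $Y\mapsto Y^{\otimes p}$ in the directions $A,B,C$ reorganises into a sum of pure tensor monomials, each individually positive semidefinite; once this combinatorial picture is set, the inclusion--exclusion collapse is routine and no spectral or eigenvalue-majorisation argument is required.
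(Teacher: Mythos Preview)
Your proof is correct and takes a genuinely different route from the paper. The paper proves the inequality by a monotonicity argument: it sets $f_p(Z)=(Z+X)^{\otimes p}-X^{\otimes p}$, fixes $B,C$, and shows that the map $Z\mapsto f_p(Z)+f_p(Z+B+C)-f_p(Z+B)-f_p(Z+C)$ is monotone in the L\"owner order by computing its directional derivative and invoking an auxiliary lemma (Lemma~\ref{lem_main}), itself proved by a double induction on tensor exponents. Your argument bypasses all of this: you expand each $\otimes^p$ term over words in $\{A,B,C,X\}^p$, observe that the eight terms are indexed by subsets $S\subseteq\{A,B,C\}$ with sign $(-1)^{|S|+1}$, and apply inclusion--exclusion to see that the coefficient of a tensor monomial $w_1\otimes\cdots\otimes w_p$ vanishes unless all three of $A,B,C$ occur among the $w_i$, in which case it equals $1$. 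What remains is a sum of Kronecker products of positive semidefinite matrices, hence positive semidefinite. This is shorter, avoids induction and differentiation, and makes the equality for $p\le 2$ immediate; it also visibly generalises to $n$ matrices (yielding exactly the words that use all of $A_1,\ldots,A_n$), which the paper obtains only by alluding to a ``tedious'' extension of its monotonicity scheme. The paper's approach, on the other hand, highlights the monotonicity of the second-order difference map and connects more directly to the difference-operator viewpoint developed in Section~\ref{sec:cones}.
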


The proof of Theorem \ref{thm_OperatorHH} follows the proof structure of \cite[Theorem~2.1]{BS2015}, but due to the additional $X$ term in~\eqref{ineq_operator HH} it turns out to be more intricate and requires some preparation. We start by introducing the following convenient notation:
\begin{equation}
X^{j}\equiv\otimes^{j}X=X^{\otimes j},\ \ \text{for}\ \ j\geq1,\quad
\text{and}\ X^{0}=1\in\mathbb{N}.\label{eq:notation}%
\end{equation}
The slight abuse of notation $X^{0}=1$ will be helpful in simplifying the presentation.

\begin{lemma}
\label{lem_main}Let $k,l\geq0$ be integers and $X,Y,Z,V\in\operatorname*{Sym}%
\nolimits^{+}(N,\mathbb{R)}$. Then,%
\begin{multline}
X^{k}\otimes V\otimes X^{l}+(X+Y+Z)^{k}\otimes V\otimes(X+Y+Z)^{l}%
\label{eq:20}\\
\geq(X+Y)^{k}\otimes V\otimes(X+Y)^{l}+(X+Z)^{k}\otimes V\otimes
(X+Z)^{l},\nonumber
\end{multline}
in the sense of L\"{o}wner order.
\end{lemma}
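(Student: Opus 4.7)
The plan is to expand each tensor power multilinearly, collect the coefficient of every resulting tensor monomial in the difference LHS $-$ RHS, and verify that each coefficient is $0$ or $+1$. Since any such monomial will be a tensor product of PSD matrices---hence PSD in the Löwner order---this will imply the inequality term by term, with no cancellations needed.

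Concretely, for a word $\alpha = (\alpha_1,\ldots,\alpha_m) \in \{X,Y,Z\}^m$ I set $T_\alpha := \alpha_1 \otimes \cdots \otimes \alpha_m$, with the convention $T_\varnothing := 1$ when $m = 0$. Distributing the tensor product over the sum yields $(X+Y+Z)^m = \sum_{\alpha \in \{X,Y,Z\}^m} T_\alpha$ and analogous expressions for $(X+Y)^m$, $(X+Z)^m$, $X^m$ with the appropriate subalphabet (the last one reducing to the single all-$X$ word). Substituting these expansions into
$$D := X^k \otimes V \otimes X^l + (X+Y+Z)^k \otimes V \otimes (X+Y+Z)^l - (X+Y)^k \otimes V \otimes (X+Y)^l - (X+Z)^k \otimes V \otimes (X+Z)^l,$$
I would then collect, for each pair $(\alpha,\beta)$ of words of lengths $k$ and $l$ respectively, the coefficient of $T_\alpha \otimes V \otimes T_\beta$.

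The main step demanding care is this coefficient bookkeeping. Let $p, q \in \{0,1\}$ record whether $Y$, respectively $Z$, appears in the concatenation $\alpha\beta$. Since $T_\alpha \otimes V \otimes T_\beta$ occurs in $X^k \otimes V \otimes X^l$ iff $p = q = 0$, in $(X+Y)^k \otimes V \otimes (X+Y)^l$ iff $q = 0$, in $(X+Z)^k \otimes V \otimes (X+Z)^l$ iff $p = 0$, and always in $(X+Y+Z)^k \otimes V \otimes (X+Y+Z)^l$, its coefficient in $D$ is
$$(1-p)(1-q) + 1 - (1-q) - (1-p) = pq \in \{0,1\}.$$
Thus $D$ equals the sum of $T_\alpha \otimes V \otimes T_\beta$ over those pairs $(\alpha,\beta)$ for which both $Y$ and $Z$ appear in $\alpha\beta$. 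Each such monomial is a tensor product of PSD matrices, so is PSD in the Löwner order, and $D \geq 0$ follows. The edge cases $k = 0$ or $l = 0$ are absorbed uniformly by the convention $T_\varnothing = 1$.
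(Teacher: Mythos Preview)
Your proof is correct, and it takes a genuinely different route from the paper's. The paper argues by induction on $k$ (and then $l$): the base case $k=0$ is obtained by tensoring the known inequality $X^{l}+(X+Y+Z)^{l}\geq(X+Y)^{l}+(X+Z)^{l}$ from \cite[Theorem~2.1]{BS2015} with $V$, and the inductive step peels off one tensor factor, applies the hypothesis, and checks that the leftover cross-term $\mathcal{T}$ is PSD via elementary monotonicity. Your argument instead expands everything multilinearly and reduces the whole question to the coefficient identity $(1-p)(1-q)+1-(1-q)-(1-p)=pq$, which is an inclusion--exclusion computation in disguise.

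What each approach buys: the paper's induction mirrors the derivative/monotonicity philosophy used elsewhere in the section and makes the dependence on \cite{BS2015} explicit, but it is longer and leans on an external citation. Your expansion is self-contained (it does not need \cite{BS2015}; in fact it reproves that result as the special case $V=1$), and it exposes more structure: $D$ is exactly the sum of all monomials $T_\alpha\otimes V\otimes T_\beta$ in which both $Y$ and $Z$ occur, so positivity is visible term by term. The only facts you invoke are bilinearity of $\otimes$ and that a tensor product of PSD matrices is PSD, both standard.
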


\begin{proof}
The proof is by induction on $k$ and $l$; we provide the argument for $k$,
which holds essentially unchanged for $l$. This approach suffices since for a
fixed but arbitrary $l\geq0$ we prove that the result holds for all $k\geq0$;
similarly, for an arbitrarily fixed $k\geq0$ the result holds for all $l\geq0$.

For the base case of induction, suppose $k=0$. Then the inequality~under
attention reduces to the following one:
\begin{equation}
V\otimes X^{l}+V\otimes(X+Y+Z)^{l}\geq V\otimes(X+Y)^{l}+V\otimes
(X+Z)^{l}.\label{eq:5}%
\end{equation}
We know from \cite[Theorem~2.1]{BS2015} that
\begin{equation}
X^{l}+(X+Y+Z)^{l}\geq(X+Y)^{l}+(X+Z)^{l}.\label{eq:23}%
\end{equation}
Since tensor product preserves inequalities, taking tensor product with $V$ on
both sides of the inequality~\eqref{eq:23} one immediately get~\eqref{eq:5}.
Assume therefore that inequality~in the the statement of Lemma \ref{lem_main}
holds for a fixed $l$ and some $k>0$. Then, consider
\begin{multline*}
(X+Y+Z)^{k+1}\otimes V\otimes(X+Y+Z)^{l}\\
=(X+Y+Z)\otimes\left[  (X+Y+Z)^{k}\otimes V\otimes(X+Y+Z)^{l}\right]  \\
\geq(X+Y+Z)\otimes\left[  \left(  X+Y\right)  ^{k}\otimes V\otimes\left(
X+Y\right)  ^{l}\right.  \\
\left.  +\left(  X+Z\right)  ^{k}\otimes V\otimes\left(  X+Z\right)
^{l}-X^{k}\otimes V\otimes X^{l}\right]  ,\\
=(X+Y)^{k+1}\otimes V\otimes(X+Y)^{l}+(X+Z)^{k+1}\otimes V\otimes
(X+Z)^{l}-X^{k+1}\otimes V\otimes X^{l}+\mathcal{T},
\end{multline*}
where the inequality follows from the induction hypothesis and the elementary
monotonicity properties of the tensor product. It remains to show that the
term
\[
\mathcal{T}=Z\otimes(X+Y)^{k}\otimes V\otimes(X+Y)^{l}+Y\otimes(X+Z)^{k}%
\otimes V\otimes(X+Z)^{l}-(Y+Z)\otimes X^{k}\otimes V\otimes X^{l}%
\]
is a nonnegative operator. Since $X,Y,Z\geq0$, it follows that
$X+Y\geq X$ and $X+Z\geq X$. Thus, the positive terms in $\mathcal{T}$
attached to $Y$ and $Z$ are clearly bigger than the respective negative terms,
whence $\mathcal{T}\geq0$. Inductively, we can conclude that for fixed $l$,
the inequality in the statement of Lemma \ref{lem_main} holds for all $k\geq
0$. Applying a similar argument for $l$, we conclude that this inequality
works in full generality.
\end{proof}

We are now in a position to detail the proof of Theorem \ref{thm_OperatorHH}.

\begin{proof}
[\textbf{Proof of Theorem \ref{thm_OperatorHH}}]Using the auxiliary function
\[
f_{p}(Z)=(Z+X)^{p}-X^{p},
\]
the inequality of interest~(\ref{ineq_operator HH}) can be rewritten as
\[
f_{p}(A)+f_{p}(B)+f_{p}(C)+f_{p}(A+B+C)\geq f_{p}(A+B)+f_{p}(B+C)+f_{p}(C+A).
\]
Now introduce the function
\[
g_{p}(Z)=f_{p}(Z)+f_{p}(B)+f_{p}(C)+f_{p}(Z+B+C)-f_{p}(Z+B)-f_{p}%
(Z+C)-f_{p}(B+C).
\]
We will show that $g_{p}$ is monotonic (as a map from $\operatorname*{Sym}%
\nolimits^{+}(N,\mathbb{R)}$ into itself, under the L\"{o}wner order).
Once this monotonicity is established we can conclude that%
\[
g_{p}(A)\geq g_{p}(0)=0\text{\quad for every }A\in\operatorname*{Sym}%
\nolimits^{+}(N,\mathbb{R)},
\]
a fact equivalent to the assertion of Theorem~\ref{thm_OperatorHH}. 

Monotonicity of $g_{p}$ follows from Lemma~\ref{lemAmann} by considering its derivative. To that end, consider the (directional) derivative of the map $\Phi(Z)=Z^{p}$:
\begin{align*}
D(Z^{p})[V] &  =V\otimes Z\otimes\cdots\otimes Z+Z\otimes V\cdots\otimes
Z+\cdots+Z\otimes\cdots\otimes Z\otimes V\\
&  =\sum_{j=0}^{p-1}Z^{j}\otimes V\otimes Z^{p-1-j},
\end{align*}
whenever $Z\in\operatorname*{Sym}\nolimits^{++}(N,\mathbb{R)}$ and
$V\in\operatorname*{Sym}\nolimits^{+}(N,\mathbb{R)}.$ See \cite[Eq.~(2.13), pg.~44]{Bhatia2007}.  Indeed, applying this formula to $f_{p}(Z)=(Z+X)^{p}-X^{p}$
we obtain
\[
Df_{p}(Z)[V]=\sum_{j=0}^{p-1}(Z+V)^{j}\otimes V\otimes(Z+V)^{p-1-j},
\]
which in turn leads to the identity
\begin{multline*}
Dg_{p}(Z)[V]\\
=Df_{p}(Z)[V]+Df_{p}(Z+B+C)[V]-Df_{p}(Z+B)[V]-Df_{p}(Z+C)[V]\\
=\sum_{j=0}^{p-1}\left[  (Z+V)^{j}\otimes V\otimes(Z+V)^{p-1-j}\right.
+(Z+B+C+V)^{j}\otimes V\otimes(Z+B+C+V)^{p-1-j}\\
-(Z+B+V)^{j}\otimes V\otimes(Z+B+V)^{p-1-j}\left.  -(Z+C+V)^{j}\otimes
V\otimes(Z+C+V)^{p-1-j}\right].
\end{multline*}
But this sum evaluates to a positive quantity, which follows from Lemma~\ref{lem_main} upon
setting $k\leftarrow j$, $l\leftarrow p-1-j$ and $x\leftarrow x+j$. Now the
proof is complete.
\end{proof}

\section{Further comments and extensions}
\subsection{Multivariable case for positive operators}
Given $A_{1},...,A_{n},X\in\operatorname*{Sym}\nolimits^{+}(N,\mathbb{R)}$ and
$p\in\mathbb{N},$ let us consider the matrices
\begin{equation}
S_{0}^p=\otimes^{p}X,\text{\quad}S_{k}^p=\sum_{1\leq i_{1}<\cdots<i_{k}\leq
n}\otimes^{p}(A_{i_{1}}+\cdots+A_{i_{k}}+X)\text{ }(1\leq k\leq
N).\label{eq:8}%
\end{equation}
Then one can prove the following generalization of the operator Hornich-Hlawka inequality
(Theorem~\ref{thm_OperatorHH}):
\begin{equation}
S_{n}^p+S_{n-2}^p+\cdots\geq S_{n-1}^p+S_{n-3}^p+\cdots~,\label{eq:21}%
\end{equation}
Theorem~3.3 of~\cite{BS2015} proves inequality~\eqref{eq:21} for the special case $X=0$. One can prove the general case by a suitable monotonicity argument as in
Section~\ref{sec:cones}. The details are tedious so we omit them, leaving them as an exercise for the interested reader. 

\subsection{Two inequalities for determinants}
Serre~\cite{Serre} noticed that $\det^{1/2}$ viewed as a
function on $\operatorname*{Sym}\nolimits^{+}(2,\mathbb{R)}$ verifies the
opposite of the Hornich-Hlawka inequality,%
\begin{multline}
\label{eq:det1}
\det\nolimits^{1/2}A+\det\nolimits^{1/2}B+\det\nolimits^{1/2}C+\det
\nolimits^{1/2}(A+B+C)\\
\leq\det\nolimits^{1/2}\left(  A+B\right)  +\det\nolimits^{1/2}\left(
B+C\right)  +\det\nolimits^{1/2}\left(  C+A\right).
\end{multline}
While in~\cite{suvDetMO}, the second named author proved the following Minkowski-like inequality for $\det^{\frac1n}$, by viewing it as a function on $\operatorname*{Sym}\nolimits^{+}(n,\mathbb{R)}$:
\begin{multline}
\label{eq:det2}
    \det\nolimits^{\frac1n}(A+B)\det\nolimits^{\frac1n}(A+C)
    \ge
    \det\nolimits^{\frac1n}B
    \det\nolimits^{\frac1n}C + 
    \det\nolimits^{\frac1n}A \det\nolimits^{\frac1n}(A+B+C).
\end{multline}
Inequality~\eqref{eq:det2} is stronger the a usual log-supermodularity inequality for $\det$, given the extra $\det\nolimits^{\frac1n}B\det\nolimits^{\frac1n}C$ term on its right hand side.

\subsection{Vasic-Adamovic inequalities}
We close the paper by briefly mentioning the related class of Vasic-Adamovic inequalities. Inspired by the work of D.~M.~Smiley and M.~F.~Smiley~\cite{SmSm} on polygonal inequalities,\ P.~M.~Vasi\'{c} and D.~D.~Adamovi\'{c}~\cite{VA1968} found
an inductive scheme for generating inequalities for any function that verifies the
functional Hornich-Hlawka inequality. We state here a slightly modified version of their result~\cite[Theorem 2, pg.~528]{MPF}:
\begin{theorem}
\label{thmVA}Let $\mathcal{S}$ be a commutative additive semigroup with $0$ and $\mathcal{G}$ be an ordered abelian group (i.e., an abelian group with an order relation $\leq$) such that%
\[
x,y,z\in\mathcal{G}\text{ and }x\leq y\text{ implies }x+z\leq y+z.
\]
If $\varphi:\mathcal{S}\rightarrow\mathcal{G}$ is a function such that for all $x_{1},x_{2},x_{3}\in\mathcal{S}$, we have
\[
\sum\nolimits_{1\leq{i}<{j}\leq3}{\varphi\left(  {{x{_{i}+x}}}_{j}\right)  }%
\leq\sum\nolimits_{k=1}^{3}{\varphi\left(  x_{k}\right)  }+{\varphi}\Bigl(
{\sum\nolimits_{k=1}^{3}}x_{k}\Bigr)
\]
then for each pair $\left\{
k,n\right\}  $ of integers with $2\leq k<n$ we also have%
\[
\sum\limits_{1\leq{i_{1}}<...<{i_{k}}\leq n}{\varphi\Bigl(  {\sum\limits_{j=1}^{k}{x{_{i_{j}}}}}\Bigr)  }\leq\binom{n-2}{k-1}\sum
\limits_{k=1}^{n}{\varphi\left(  x_{k}\right)  }+\binom{n-2}{k-2}{\varphi
}\Bigl(  {\sum\limits_{k=1}^{n}}x_{k}\Bigr)  ,
\]
whenever $x_{1},...,x_{n}\in\mathcal{S}.$
\end{theorem}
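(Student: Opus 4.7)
The plan is to induct on $n$, with base case $n = 3$ (where the only allowed $k$ is $k = 2$, which is exactly the given three-variable hypothesis). Write $X = \sum_{j=1}^{n}x_{j}$ and $S_{k}(n) = \sum_{|I|=k,\,I\subseteq\{1,\ldots,n\}}\varphi\bigl(\sum_{i\in I}x_{i}\bigr)$, so that the desired claim $P(n,k)$ reads $S_{k}(n)\leq\binom{n-2}{k-1}S_{1}(n)+\binom{n-2}{k-2}\varphi(X)$. For the inductive step from $n-1$ to $n$, I would separately handle the \emph{interior} range $2\leq k\leq n-2$ and the \emph{boundary} case $k=n-1$.

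For the interior cases, apply the inductive hypothesis $P(n-1,k)$ to each of the $n$ subfamilies $\{x_{1},\ldots,\hat{x}_{m},\ldots,x_{n}\}$ obtained by removing a single variable, and sum the resulting inequalities over $m$. A double-counting argument collapses the LHS into $(n-k)\,S_{k}(n)$, since each $k$-subset of $\{1,\ldots,n\}$ is missed by exactly $n-k$ values of $m$; the RHS becomes $(n-1)\binom{n-3}{k-1}S_{1}(n)+\binom{n-3}{k-2}S_{n-1}(n)$. The elementary binomial identities
\[
(n-k)\binom{n-2}{k-1}=(n-1)\binom{n-3}{k-1}+\binom{n-3}{k-2},\qquad (n-k)\binom{n-2}{k-2}=(n-2)\binom{n-3}{k-2},
\]
then show that, provided one already has the boundary bound $S_{n-1}(n)\leq S_{1}(n)+(n-2)\varphi(X)$, canceling the common factor $n-k$ (valid by translation-invariance of the order on $\mathcal{G}$) yields $P(n,k)$.

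The main obstacle is the boundary case $k=n-1$, which cannot be reduced to an $(n-1)$-variable statement via the inductive hypothesis. For this I would apply the base three-variable HH inequality directly to triples $(x_{A},x_{B},x_{C})$, where $\{A,B,C\}$ ranges over partitions of $\{1,\ldots,n\}$ into three non-empty parts, with weights depending on the partition type $(|A|,|B|,|C|)$. In the small cases $n=4$ (only type $(2,1,1)$ appears) and $n=5$ (types $(3,1,1)$ and $(2,2,1)$), summing with unit weight per partition gives $P(4,3)$ and $P(5,4)$ directly after cancellation of the intermediate subset-sum terms. For general $n$, the counts of how often each subset-sum occurs on either side reduce the existence of appropriate nonnegative partition-type weights to a small linear system, whose feasibility can be established either by an auxiliary induction on $n$ or by an explicit generating-function computation. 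This combinatorial bookkeeping is the technical crux; once the boundary case is settled, the two binomial identities above finish the interior range without further difficulty.
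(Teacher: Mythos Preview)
The paper does not prove Theorem~\ref{thmVA}; it is merely stated as a ``slightly modified version'' of \cite[Theorem~2, p.~528]{MPF} and left without argument, so there is no in-paper proof to compare against. Your inductive scheme is broadly the classical Vasi\'{c}--Adamovi\'{c} strategy, but two points deserve attention.

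First, the step ``canceling the common factor $n-k$ (valid by translation-invariance of the order on $\mathcal{G}$)'' is not justified. Translation invariance gives $a\le b\iff a+c\le b+c$; it does \emph{not} give $ma\le mb\Rightarrow a\le b$ for a positive integer $m$. An ordered abelian group need not be unperforated: take $\mathbb{Z}$ with positive cone $2\mathbb{Z}_{\ge0}$, so that $2\cdot 0\le 2\cdot 1$ while $0\not\le 1$. Your symmetrize-over-deletions argument therefore does not close at the stated level of generality; it is fine only once one assumes (as in the real-valued original) that the target order is unperforated.

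Second, the boundary case $k=n-1$ is much simpler than a weighted sum over tripartitions. Assuming $P(n-1,n-2)$, apply it to the $(n-1)$-tuple $(x_{1},\ldots,x_{n-2},\,x_{n-1}+x_{n})$, and apply the three-variable hypothesis to the triple $(x_{1}+\cdots+x_{n-2},\,x_{n-1},\,x_{n})$. Adding the two resulting inequalities and subtracting the common quantity $\varphi(x_{n-1}+x_{n})+\varphi(X-x_{n-1}-x_{n})$ from both sides---a genuine translation, hence legitimate---yields exactly $S_{n-1}(n)\le S_{1}(n)+(n-2)\varphi(X)$. No partition bookkeeping, no linear system, no division. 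This is essentially the route taken in the classical sources, and the same merge-two-variables device (combine $x_{n-1}+x_{n}$ and invoke the level-$(n-1)$ estimates for both $k$ and $k-1$) can be used for the interior range as well, again sidestepping the cancellation problem.
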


This claim applies to every function $f$ (defined on a convex cone $\mathcal{C}$ and
taking values in an ordered Banach space $E)$ that has positive differences of
the third order. Indeed, in their case,
\begin{multline*}
f\left(  x+t\right)  +f\left(  y+t\right)  +f\left(  z+t\right)  +f\left(
x+y+z+t\right) \\
\geq f\left(  x+y+t\right)  +f\left(  y+z+t\right)  +f\left(  z+x+t\right)
+f(t)
\end{multline*}
for all points $x,y,z,t\in\mathcal{C}$ and performing the change of function
$\varphi(v)=f(v+t)-f(t)$ we obtain a function that verifies the Hornich-Hlawka
inequality $\;$%
\[
\varphi\left(  x\right)  +\varphi\left(  y\right)  +\varphi\left(  z\right)
+\varphi\left(  x+y+z\right)  \geq\varphi\left(  x+y\right)  +\varphi\left(
y+z\right)  +\varphi\left(  z+x\right)  .
\]



\begin{thebibliography}{99}         

\bibitem {Amann1974}Amann H.: Multiple positive fixed points of asymptotically
linear maps. J. Funct. Anal. \textbf{17}, 174-213 (1974).

\bibitem {Ben2010}G. Bennett, Some forms of majorization, Houston J. Math.
\textbf{36} (2010), 1037-1066.

\bibitem {BS2015}Berndt, W., Sra, S.: Hlawka--Popoviciu inequalities on
positive definite tensors. Linear Algebra Appl. \textbf{486}, 317-327 (2015).

\bibitem {Bern1929}Bernstein, S.: Sur les fonctions absolument monotones, Acta
Math. 52, 1--66 (1929).

\bibitem {Bhatia1997}Bhatia, R.: Matrix Analysis. Springer (1997).
  
\bibitem {Bhatia2007}Bhatia, R.: Positive Definite Matrices. Princeton
University Press (2007).

\bibitem {BW1940}Boas, R.P., Jr., Widder, D.V.: Functions with positive
differences. Duke Math. J. \textbf{7}, 496--503 (1940).

\bibitem {Ch1969}G. Choquet: Deux exemples classiques de repr\'{e}sentation
int\'{e}grale, Enseign. Math. \textbf{15}, 63-75 (1969).

\bibitem {Del2018}Del Moral, P., Niclas, A.: A Taylor expansion of the square
root matrix function. J. Math. Anal. Appl. \textbf{465}(1), 259-266 (2018).

\bibitem {Fr1935}Fr\'{e}chet, M.: Sur la definition axiomatique d'une classe
d'espaces vectoriels distanci\'{e}s applicables vectoriellement sur l'espace
de Hilbert. Ann. of Math. \textbf{36}, 705--718 (1935).

\bibitem {Gal2008}Gal, S.G.:\ Shape Preserving Approximation by Real and
Complex Polynomials. Birkh\"{a}user, Boston (2008).

\bibitem {GN2019}Gal, S.G., Niculescu C.P.: A new look at Popoviciu's concept
of convexity for functions of two variables. J. Math. Anal. Appl.
\textbf{479}(1), 903-925 (2019)

\bibitem {Hopf}Hopf, E.: \"{U}ber die Zusammenh\"{a}nge zwischen gewissen
h\"{o}heren Differenzenquotienten reeller Funktionen einer reellen Variablen
und deren Differenzierbarkeitseigenschaften. Dissertation. Univ. Berlin (1926).

\bibitem {Ho1942}Hornich, H.: Eine Ungleichung f\"{u}r Vektorl\"{a}ngen. Math.
Z. \textbf{48,} 268--274 (1942)

\bibitem {KMS2019}Kozhasov K., Michalek M., Sturmfels B.: Positivity
certificates via integral representations. volume II of London Math Society
Lecture Notes Series, pages 84--114. Cambridge University Press (2022).

\bibitem {Kuc2009}Kuczma, M.: An Introduction to the Theory of Functional
Equations and Inequalities: Cauchy's Equation and Jensen's Inequality.
Springer Science \& Business Media (2009).

\bibitem {Levi}Levi, F.W.: Ein Reduktionsverfahren f\"{u}r lineare
Vektorungleichungen. Archiv der Mathematik \textbf{2}, 24-26 (1949).

\bibitem {Lin}Lin, M.: A determinantal inequality for positive semidefinite
matrices. The Electronic Journal of Linear Algebra \textbf{27}, 821-826 (2014)

\bibitem {LP}J. Lindenstrauss, A. Pe\l czy\'{n}ski, Absolutely summing
operators in $\mathcal{L}_{p}$-spaces and their applications, Studia Math.
\textbf{29}, 275--326 (1968).

\bibitem {Mar}Marcus, M.: Finite Dimensional Multilinear Algebra, vol.I.
Marcel Dekker (1973).

\bibitem {Merris}Merris, R.: Multilinear Algebra. Gordon \& Breach. Amsterdam (1997).

\bibitem {Mirsky}Mirsky, L.: An introduction to linear algebra. Oxford
University Press (1961).

\bibitem {MPF}Mitrinovi\'{c}, D.S., Pe\v{c}ari\'{c}, J.E., Fink, A.M.:
Classical and new inequalities in analysis. Kluwer Academic Publishers,
Dordrecht (1993).


\bibitem {Nelsen}Nelsen, R.B.: An introduction to copulas. Springer Science \&
Business Media (2006).

\bibitem {NO2020}Niculescu, C.P., Olteanu, O.: From the Hahn-Banach extension
theorem to the isotonicity of convex functions and the majorization theory.
Revista de la Real Academia de Ciencias Exactas, F\'{\i}sicas y Naturales
(RACSAM), Serie A. Matem\'{a}ticas \textbf{114}(4), 1-19 (2020).

\bibitem {NP2018}Niculescu, C.P., Persson, L.-E.: Convex Functions and their
Applications. A Contemporary Approach,\ 2nd Ed., CMS Books in Mathematics Vol.
\textbf{23}, Springer-Verlag, New York (2018).

\bibitem {PTZ}V. Paksoy, V., Turkmen, R., Zhang, F.: Inequalities of
generalized matrix functions via tensor products. Electron. J. Linear Algebra,
\textbf{27}, 332--341 (2014).

\bibitem {PPT}Pecaric, J.E., Proschan, F., Tong, Y.L.: Convex Functions,
Partial Orderings, and Statistical Applications. Mathematics in Science and
Engineering vol. 187. Acad. Press., Inc., Boston (1992).

\bibitem {Pop34}Popoviciu, T.: Sur l'approximation des fonctions convexes
d'ordre sup\'{e}rieur. Mathematica (Cluj) \textbf{8},\textbf{ }1-85 (1934).

\bibitem {Pop35}Popoviciu, T.: Sur l'approximation des fonctions convexes
d'ordre sup\'{e}rieur. Mathematica (Cluj) \textbf{10}, 49-54 (1935).

\bibitem {Pop1944}Popoviciu, T.: Les Fonctions Convexes. Hermann Cie.
Editeurs, Paris (1944).

\bibitem {Pop1946}Popoviciu, T.: On some inequalities. Gaz. Mat.,
Bucure\c{s}ti \textbf{51}, 81--85 (1946). (Romanian)

\bibitem {SW}Schaefer, H.H., Wolff, M.P.: Topological Vector Spaces. Graduate
Texts in Mathematics vol. \textbf{3}, Springer Verlag, 1966.

\bibitem {SSV}Schilling, R.L., Song, R., Vondra\v{c}ek, Z.: Bernstein
Functions. Theory and Applications. De Gruyter, Berlin (2010).

\bibitem {SS2014}Scott, A.D., Sokal, A.D.: Complete monotonicity for
inverse powers of some combinatorially defined polynomials. Acta Mathematica
\textbf{213}(2), 323-392 (2014).

\bibitem {SZ2014}Sendov, H.S., Zitikis, R.: The shape of the
Borwein--Affleck--Girgensohn function generated by completely monotone and
Bernstein functions. Journal of Optimization Theory and Applications
\textbf{160}(1), 67-89 (2014).

\bibitem {Serre}Serre, D.: The reverse Hlawka inequality in a Minkowski space.
Comptes Rendus Mathematique \textbf{353}(7), 629-633 (2015).

\bibitem {Siegel}Siegel, C. L.: Uber die analytische Theorie der quadratischen
Formen. Ann. of Math. \textbf{36}, 527--606 (1935).

\bibitem {SmSm} Smiley, D.M., Smiley, M.F.: The polygonal inequalities. Am.
Math. Mon. \textbf{71}, 755--760 (1964).

\bibitem{suvDetMO}
Sra, S.
\newblock Reverse Minkowski (and related) determinant inequalities.
\newblock MathOverflow.
\newblock \textit{\href{https://mathoverflow.net/q/251684}{https://mathoverflow.net/q/251684}} (version: 10/10/2016).

\bibitem {VA1968}Vasi\'{c}, P.M., Adamovi\'{c} D.D.: Sur un syst\`{e}me infini
d'in\'{e}galit\'{e}s fonctionnelles. Publ. Inst. Math. Nouv. S\'{e}r.
\textbf{9} (23), 107--114 (1969).

\bibitem {Zhang}Zhang, F.: Matrix Theory: Basic Results and Techniques, second
edition. Springer, New York (2011).
\end{thebibliography}
\end{document}